\documentclass{article}
\usepackage{latexsym}
\usepackage{amssymb,amsthm,amsmath}
\usepackage{verbatim}

\usepackage{latexsym}
\usepackage{amssymb,amsmath}
\usepackage{amscd}


\textwidth=18cm
\oddsidemargin=-1cm
\evensidemargin=-1cm

\newtheorem{theorem}{Theorem}[section]
\newtheorem{proposition}[theorem]{Proposition}
\newtheorem{lemma}[theorem]{Lemma}

\newtheorem{definition}[theorem]{Definition}

\newcommand{\be}{\begin{eqnarray}}
\newcommand{\ee}{\end{eqnarray}}
\newcommand{\ben}{\begin{eqnarray*}}
\newcommand{\een}{\end{eqnarray*}}
\newcommand{\NN}{\mathbb N}
\newcommand{\ZZ}{\mathbb Z}

\newcommand{\FF}{\mathbb F}

\def\N{\mathbf N}

\def\cD{\mathcal D}

\def\cH{\mathcal H}

\def\cX{\mathcal X}

\def\l{\ell}

\def\fq{{\mathbb F}_q}

\def\e{{\epsilon}}
\sloppy




\begin{document}
\title{On linear codes from maximal curves}

\author{Stefania~Fanali
\thanks{S. Fanali is with the Dipartimento di Matematica e Informatica, Universit\`a di  Perugia,
Via Vanvitelli 1, 06123, Perugia,
Italy (e-mail: stefania.fanali@dipmat.unipg.it)}
\thanks{This research was performed within the activity of GNSAGA of the
Italian INDAM.}}
\maketitle

\begin{abstract} Some linear codes associated to maximal algebraic curves via Feng-Rao construction  \cite{FR2} are investigated. In several case, these codes have better minimum distance with respect to the previously known linear codes with same length and dimension.
\end{abstract}

\section{Introduction}
The idea of constructing linear codes from algebraic curves defined over a finite field $\fq$ goes back to Goppa \cite{GOP}. These codes are usually called Algebraic Geometric Codes, AG codes for short. Typically, AG codes with good parameters arise from curves with a large number $N$ of $\fq$-rational points with respect to their genus $g$.  In fact, for an $[N,k]_q$ AG code  code associated to a curve of genus $g$,  the sum of its transmission rate plus its relative minimum distance is at least
$
1-\frac{g-1}{N}.
$
An upper bound on $N$ is given by the Hasse-Weil estimate $N\le q+1+2g\sqrt q$.

In 1995 Feng and Rao \cite{FR2} introduced the so-called Improved AG Codes, see Section \ref{improved}. The parameters of these codes depend on the pattern of the Weierstrass semigroup at the points of the underlying curve, and it has emerged that they can be significantly better than those of the ordinary AG codes.  

The aim of this paper is to  investigate the parameters of the Improved AG Codes associated to some classes of maximal curves, that is, curves for which the Hasse-Weil upper bound is attained. Since maximal curves with positive genus exist only for square $q$, henceforth  we assume that $q=q_02$. The main achievement of the paper is the discovery of several  linear codes that apparently have better parameters with respect to the previously known ones, see Appendix. Our method is based on the explicit description of the Weierstrass semigroup at some $\fq$-rational points of the curves under investigation.

The maximal curves that will be considered  are the following.
\begin{itemize}
\item[(A)] Curves with equation $X^{2m}+X^{m}+Y^{q_{0}+1}=0$, where $m>2$ is a divisor of $q_{0}+1$, and $\Delta=\frac{q_{0}+1}{m}>3$ is a prime \cite{GHKT}.
\item[(B)] Curves with equation $X^{2i+2}+X^{2i}+Y^{q_{0}+1}=0$, where $\Delta=\frac{q_{0}+1}{2}>3$ is a prime, and $1\leq i\leq \Delta-2$ \cite{GHKT2}.
\item[(C)] Quotient curves of the Hermitian curve $Y^{q_0+1}=X^{q_{0}}+X$ with respect to the additive subgroups of $H=\{c\in \fq\mid c^{q_0}+c=0\}$ \cite{GSX}.
\item[(D)] Curves with equation $Y^{m}=X^{q_{0}}+X$, where $m$ is a proper divisor of  $q_{0}+1$ \cite{CKT1}.
\item[(E)] Curves with equation $Y^{\frac{q-1}{m}}=X(X+1)^{q_{0}-1}$, where $m$ is a divisor of $q-1$ \cite{GSX}.

\end{itemize}

\section{Notation and Preliminaries}\label{sec2}

\subsection{Curves} Throughout the paper, by a curve we mean a projective, geometrically irreducible, non-singular algebraic curve defined over a finite field.  Let $q_{0}$ be a prime power, $q=q_{0}^{2}$, and let $\cX$ be a curve defined over the finite field $\FF_{q}$ of order $q$. Let $g$ be the genus of $\cX$.
Henceforth, the following notation is used:
\begin{enumerate}
\item[$\bullet$]  $\cX(\FF_{q})$ (resp. $\FF_{q}(\cX)$) denotes the set of $\FF_{q}$-rational points (resp. the field of $\FF_{q}$-rational functions) of $\cX$.
\item[$\bullet$] $\cH$ is the Hermitian curve over $\FF_{q}$ with affine equation
\begin{equation}\label{Hcurve}
Y^{q_{0}+1}=X^{q_0}+X.
\end{equation}
\item[$\bullet$] For $f\in\FF_{q}(\cX)$,  $(f)$ (resp. $(f)_{\infty}$) denotes the divisor (resp. the pole divisor) of  $f$.
\item[$\bullet$] Let $P$ be a point of $\cX$. Then $ord_{P}$ (resp. $H(P)$) stands for the valuation (resp. for the Weierstrass non-gap semigroup) associated to $P$. The $i$th non-gap at $P$ is denoted as $m_i(P)$.

\end{enumerate}

\subsection{One-point AG Codes and Improved AG Codes}\label{improved}
Let $\cX$ be a curve, let $P_{1},P_{2},\ldots,P_{n}$ be $\FF_{q}$-rational points of $\cX$, and let $D$ be the divisor $P_{1}+P_{2}+\ldots+P_{n}$. Furthermore, let  $G$ be some other divisor that has support disjoint from $D$.
The AG code $C(D,G)$ of length $n$ over $\FF_{q}$ is the image of the linear map $\alpha : L(G)\rightarrow\FF_{q}^{n}$ defined by $\alpha(f)=(f(P_{1}),f(P_{2}),\ldots,f(P_{n}))$. If $n$ is bigger than $deg(G)$, then $\alpha$ is an embedding, and the dimension $k$ of  $C(D,G)$ is equal to $\ell(G)$. The Riemann-Roch theorem makes it possible to estimate the parameters of $C(D,G)$. In particular, if $2g-2<deg(G)<n$, then $C(D,G)$ has dimension $k=deg(G)-g+1$ and minimum distance $d\geq n-deg(G)$, see e.g. \cite[Theorem~2.65]{HLP}. A generator matrix $M$ of $C(D,G)$ is
$$M=\left(\begin{array}[pos]{ccc}
f_{1}(P_{1}) & \ldots & f_{1}(P_{n}) \\
\vdots & \ldots & \vdots \\
f_{k}(P_{1}) & \ldots & f_{k}(P_{n})\\
\end{array}\right),$$
where $f_{1},f_{2},\ldots,f_{k}$ is an $\FF_{q}$-basis of $L(G)$.
The dual code $C^{\perp}(D,G)$ of $C(D,G)$ is an AG code with dimension $n-k$ and minimum distance greater than or equal to $deg(G)-2g+2$.
When $G=\gamma P$  for an $\FF_{q}$-rational $P$ point of $\cX$, and a positive integer $\gamma$, AG codes $C(D,G)$ and $C^{\perp}(D,G)$ are referred to as one-point AG codes. We recall some results on the minimum distance of one-point AG codes. By \cite[Theorem~3]{GKL}, we can assume that $\gamma$ is a non-gap at $P$. Let
$$H(P)=\left\{\rho_{1}=0<\rho_{2}<\ldots\right\},$$
and set $\rho_{0}=0$.
Let $f_{\l}$ be a rational function such that $div_{\infty}(f_{\l})=\rho_{\l}P$, for any $\l\geq1$. Let $D=P_{1}+P_{2}+\ldots+P_{n}$. Let also
\begin{equation}\label{hl}
h_{\l}=(f_{\l}(P_{1}),f_{\l}(P_{2}),\ldots,f_{\l}(P_{n}))\in\FF_{q}^{n}.
\end{equation}
%

Set
$$\nu_{\l}:=\#\left\{(i,j)\in\NN^{2} : \rho_{i}+\rho_{j}=\rho_{\l+1}\right\}$$
for any $\l\geq0$. Denote with $C_{\l}(P)$ the dual of the AG code $C(D,G)$, where $D=P_{1}+P_{2}+\ldots+P_{n}$, and $G=\rho_{\l}P$.

\begin{definition}\label{n5}
Let $d$ be an integer greater than $1$. The Improved AG code $\tilde{C}_d(P)$ is the code
$$\tilde{C}_d(P):=\left\{x\in\FF_{q}^{n} : \left\langle x,h_{i+1}\right\rangle=0 \textrm{ for all } i \textrm{ such that } \nu_{i}<d\right\},$$
see {\rm \cite[Def. 4.22]{HLP}}.
\end{definition}
\begin{theorem}[Proposition 4.23 in \cite{HLP}]\label{dist2}
Let
$$r_{d}:=\#\left\{i\geq0 : \nu_{i}<d\right\}.$$
Then $\tilde{C}_d(P)$ is an $\left[n,k,d'\right]$-code, where
$k\geq n-r_{d}$, and $d'\geq d$.
\end{theorem}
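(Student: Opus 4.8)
The dimension estimate is immediate and I would dispose of it first. By Definition~\ref{n5} the code $\tilde{C}_d(P)$ is exactly the solution set in $\FF_q^n$ of the homogeneous linear system $\langle x,h_{i+1}\rangle=0$ as $i$ runs over $\{i\ge0:\nu_i<d\}$, a set with precisely $r_d$ elements. A system of $r_d$ linear equations in $n$ unknowns has solution space of dimension at least $n-r_d$, so $k\ge n-r_d$; here $r_d$ is finite because $\nu_i\to\infty$ with $i$. Thus the real content is the distance bound $d'\ge d$, which I would establish by the Feng--Rao syndrome-matrix argument.

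\textbf{The syndrome matrix.} Fix a nonzero $x\in\tilde{C}_d(P)$ of weight $w=w(x)$, write $s_\ell=\langle x,h_\ell\rangle$, and introduce the symmetric matrix $S=(S_{ab})_{a,b\ge1}$ with $S_{ab}=\sum_{v}x_v f_a(P_v)f_b(P_v)$. Two facts drive the proof. First, setting $U=\supp(x)$ one has $S=G\,\mathrm{diag}(x|_U)\,G^{\top}$, where $G$ collects the values $f_a(P_v)$ for $v\in U$; since the inner factor is a $|U|\times|U|$ matrix, $\mathrm{rank}(S)\le|U|=w$. Second, because $H(P)$ is closed under addition, the product $f_af_b$ has pole divisor $(\rho_a+\rho_b)P$, hence $f_af_b=c_{ab}f_\ell+(\text{functions of smaller pole order at }P)$ with $\rho_\ell=\rho_a+\rho_b$ and $c_{ab}\ne0$; evaluating at the $P_v$ gives $S_{ab}=c_{ab}\,s_\ell+\sum_{\rho_{\ell'}<\rho_a+\rho_b}(\ast)\,s_{\ell'}$, so $S_{ab}$ is governed by the single ``top'' syndrome $s_\ell$.

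\textbf{Extracting a nonsingular minor.} Let $m+1$ be the least index with $s_{m+1}\ne0$; such an index exists because for large $\ell$ the evaluation vectors $h_1,\dots,h_\ell$ span $\FF_q^n$ (the map $L(\rho_\ell P)\to\FF_q^n$ is onto once $\rho_\ell>n+2g-2$), so $x\ne0$ cannot annihilate all of them. Since $s_{m+1}\ne0$, the defining constraints force $\nu_m\ge d$ (otherwise $\langle x,h_{m+1}\rangle$ would be required to vanish). Now consider the $\nu_m$ cells $(a,b)$ with $\rho_a+\rho_b=\rho_{m+1}$, paired as $(a_t,b_t)$ so that $\rho_{a_1}<\cdots<\rho_{a_{\nu_m}}$ and hence $\rho_{b_1}>\cdots>\rho_{b_{\nu_m}}$, and take the $\nu_m\times\nu_m$ submatrix $M$ of $S$ on rows $a_1,\dots,a_{\nu_m}$ and columns $b_1,\dots,b_{\nu_m}$. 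As $s_1=\cdots=s_m=0$ by minimality, the second fact yields $M_{t,t}=c_{a_tb_t}s_{m+1}\ne0$, while $M_{t,s}=0$ whenever $\rho_{a_t}+\rho_{b_s}<\rho_{m+1}$, that is whenever $s>t$. Therefore $M$ is lower triangular with nonzero diagonal, $\det M\ne0$, and $\mathrm{rank}(S)\ge\nu_m$. Combined with the first fact this gives $w\ge\mathrm{rank}(S)\ge\nu_m\ge d$, proving $d'\ge d$.

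\textbf{Main obstacle.} The crux is the leading-term structure of the second fact: one must argue rigorously, from the fact that $\rho_\ell$ enumerates the semigroup $H(P)$ and that $H(P)$ is additively closed, that each $S_{ab}$ is a combination of syndromes $s_{\ell'}$ with $\rho_{\ell'}\le\rho_a+\rho_b$ in which the term of maximal pole order carries a nonzero coefficient. Given this, the triangular pattern of $M$ (nonzero anti-diagonal, vanishing entries strictly on the low-degree side) and the implication $\nu_m\ge d\Rightarrow w\ge d$ are essentially bookkeeping. The only auxiliary point needing care is the existence of a nonzero syndrome, which rests on the eventual surjectivity of the evaluation map noted above.
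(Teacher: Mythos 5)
The paper gives no proof of this statement—it is quoted directly as Proposition 4.23 of \cite{HLP}—and your argument is a correct reconstruction of the standard Feng--Rao proof from that reference: the linear-algebra dimension count for $k\ge n-r_d$, and for $d'\ge d$ the syndrome matrix $S$ with $\mathrm{rank}(S)\le w(x)$ on one side and a triangular $\nu_m\times\nu_m$ minor with nonzero diagonal on the other, where $m+1$ is the first index with nonzero syndrome and the membership condition forces $\nu_m\ge d$. All the supporting points (existence of a nonzero syndrome via eventual surjectivity of the evaluation map, and the leading-coefficient claim $f_af_b=c_{ab}f_\ell+\cdots$ with $c_{ab}\ne0$, which follows since $f_1,\dots,f_\ell$ is a basis of $L(\rho_\ell P)$ and $f_af_b$ has pole order exactly $\rho_a+\rho_b=\rho_\ell$ at $P$) are handled adequately, so no gap remains.
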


\subsection{Maximal Curves} A curve $\cX$ is called $\FF_{q}$-maximal if the number of its $\FF_{q}$-rational points attains
the Hasse-Weil upper bound, that is,
$$\#\cX(\FF_{q})=q+1+2gq_{0},$$
where $g$ is the genus of $\cX$.

A key tool for the investigation of maximal curves is Weiestrass Points theory.
The Frobenius linear series of a maximal curve $\cX$ is the complete linear series $\cD=|(q_{0}+1)P_0|$, where $P_0$ is any $\FF_{q}$-rational point of $\cX$.
The next result provides a relationship between $\cD$-orders and non-gaps at points of $\cX$.

\begin{proposition}[\cite{FGT}]\label{FGT4}
Let $\cX$ be a maximal curve over $\FF_{q}$, and let $\cD$ be the Frobenius linear series of $\cX$. Then
\begin{enumerate}
\item[(i)] For each point $P$ on $\cX$, we have $\l(q_{0}P)=r$, i.e.,
$$0<m_{1}(P)<\ldots<m_{r-1}(P)\leq q_{0}<m_{r}(P).$$
\item[(ii)] If $P$ is not rational over $\FF_{q}$, the $\cD$-orders at the point $P$ are
$$0\leq q_0-m_{r-1}(P)<\ldots<q_0-m_{1}(P)<q_0.$$
\item[(iii)] If $P$ is rational over $\FF_{q}$, the $(\cD, P)$-orders are
$$0<q_0+1-m_{r-1}(P)<\ldots<q_0+1-m_{1}(P)<q_0+1.$$
In particular, if $j$ is a $\cD$-order at a rational point $P$, then $q_0+1-j$ is a non-gap at $P$.
\item[(iv)] If $P\in\cX(\FF_{q})$, then $q_0$ and $q_0+1$ are non-gaps at $P$.
\end{enumerate}
\end{proposition}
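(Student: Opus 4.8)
The engine behind all four parts is the fundamental linear equivalence enjoyed by a maximal curve. Writing $\Phi$ for the $\FF_q$-Frobenius morphism of $\cX$, the plan is first to record that maximality forces
\[
q_0P+\Phi(P)\sim (q_0+1)P_0\qquad\text{for every }P\in\cX .
\]
Indeed, $\#\cX(\FF_q)=q+1+2gq_0$ says that the $2g$ Frobenius eigenvalues, each of absolute value $q_0$, sum to $-2gq_0$, so every one of them equals $-q_0$; since Frobenius acts semisimply on $\mathrm{Jac}(\cX)$, it acts there as multiplication by $-q_0$. Applying this to the class of $P-P_0$ and using $\Phi(P_0)=P_0$ yields the displayed equivalence. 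This is the single hard input — it is precisely the statement that $\cX$ is Frobenius non-classical with respect to $\cD$, and it is where all the arithmetic of maximality is spent; everything afterwards is linear-equivalence bookkeeping with the Riemann--Roch space.

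Granting this, part (i) is immediate. The equivalence gives $q_0P\sim (q_0+1)P_0-\Phi(P)$, whence $\ell(q_0P)=\ell\bigl((q_0+1)P_0-\Phi(P)\bigr)$. Since $\cD=|(q_0+1)P_0|$ is base-point free, removing the single point $\Phi(P)$ lowers the dimension by exactly $1$, so $\ell(q_0P)=\ell\bigl((q_0+1)P_0\bigr)-1=:r$, the same integer for every $P$. Reading this off on the semigroup side is exactly the assertion $0<m_1(P)<\dots<m_{r-1}(P)\le q_0<m_r(P)$.

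For the order statements I would convert $(\cD,P)$-orders into non-gaps through the same equivalence. An integer $j$ is a $(\cD,P)$-order iff $\ell\bigl((q_0+1)P_0-jP\bigr)>\ell\bigl((q_0+1)P_0-(j+1)P\bigr)$, and $(q_0+1)P_0-jP\sim (q_0-j)P+\Phi(P)$. When $P$ is $\FF_q$-rational, $\Phi(P)=P$ and the right-hand side is $(q_0+1-j)P$, so $j$ is a $\cD$-order iff $q_0+1-j$ is a non-gap at $P$; letting $m$ run through the non-gaps $\le q_0+1$ produces the orders $q_0+1-m$, i.e.\ exactly $0<q_0+1-m_{r-1}(P)<\dots<q_0+1-m_1(P)<q_0+1$, which is (iii), and the ``in particular'' clause is this same dictionary read backwards. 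When $P$ is not rational, $\Phi(P)\neq P$ contributes no pole at $P$, so the shift is by $q_0$ instead of $q_0+1$ and one obtains the list in (ii). Here, however, the extra point $\Phi(P)$ must be carried along, so the correct translation is with the semigroup of $P$ \emph{relative to} $\Phi(P)$ rather than with $H(P)$ itself; matching the $r+1$ dimension jumps of $\ell\bigl(mP+\Phi(P)\bigr)$, $0\le m\le q_0$, to the displayed chain $0\le q_0-m_{r-1}(P)<\dots<q_0-m_1(P)<q_0$ — in particular deciding when the bottom entry $q_0-m_{r-1}(P)$ collapses to $0$ — is the one genuinely fiddly counting step.

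Finally, part (iv). For rational $P\neq P_0$ the function $f$ realizing the equivalence has $\mathrm{div}(f)=(q_0+1)P_0-(q_0+1)P$, so its pole divisor is exactly $(q_0+1)P$ and $q_0+1\in H(P)$; the case $P=P_0$ is handled by choosing a different rational base point, which exists since $\#\cX(\FF_q)\ge q_0^3+1>1$. That $q_0\in H(P)$ I would extract from the order description (iii): the first positive $(\cD,P)$-order equals $1$ because the Frobenius series separates tangent directions, and then (iii) forces $q_0+1-m_{r-1}(P)=1$, i.e.\ $m_{r-1}(P)=q_0$, so $q_0$ is a non-gap. The main obstacle throughout remains the first paragraph — establishing the fundamental equivalence, equivalently the semisimplicity of Frobenius that pins every eigenvalue to $-q_0$ — together with the relative-semigroup bookkeeping needed to make the non-rational list in (ii) come out with the right number of entries.
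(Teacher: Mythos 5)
The paper does not prove this proposition at all: it is quoted verbatim from the reference [FGT] (Fuhrmann--Garcia--Torres), so there is no in-paper argument to compare against. Judged on its own, your proposal has the right skeleton --- the fundamental equivalence $q_0P+\Phi(P)\sim (q_0+1)P_0$, derived from the fact that maximality forces every Frobenius eigenvalue to be $-q_0$ and hence (by Tate semisimplicity) $\Phi=-q_0$ on the Jacobian, followed by linear-equivalence bookkeeping --- and this is indeed how the cited source proceeds for parts (i) and (iii). (Do note that (i) also needs base-point-freeness of $\cD$, which you assert but which should be justified by exhibiting, for each point $Q$, a divisor $q_0P+\Phi(P)\in\cD$ avoiding $Q$.)

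There are, however, two genuine gaps. First, in (ii) you explicitly defer ``the one genuinely fiddly counting step,'' and that step is where the content lies: for non-rational $P$ one must analyze $\ell\bigl((q_0-j)P+\Phi(P)\bigr)$, and the quantity $a_k:=\ell(kP+\Phi(P))-\ell(kP)\in\{0,1\}$ jumps from $0$ to $1$ at exactly one index $k^*\le q_0$ with $k^*\notin H(P)$; the $r+1$ orders are then $\{q_0-m: m\in H(P),\, m\le q_0\}\cup\{q_0-k^*\}$, and matching this with the displayed list requires locating $k^*$. Nothing in your write-up does this. Second, and more seriously, your proof that $q_0\in H(P)$ for rational $P$ is circular: by your own dictionary in (iii), the assertion ``the first positive $(\cD,P)$-order equals $1$'' is \emph{equivalent} to ``$q_0+1-m_{r-1}(P)=1$,'' i.e.\ to $q_0\in H(P)$, so invoking ``the Frobenius series separates tangent directions'' assumes exactly what is to be shown. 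This is the one part of the proposition that does not follow from the fundamental equivalence by pure bookkeeping (the divisors $q_0Q+\Phi(Q)$ meet a rational point $P$ with multiplicity $0$ or $q_0+1$ only, so they never witness the order $1$ there); in [FGT] it requires additional input from the St\"ohr--Voloch order-sequence machinery. As it stands, $q_0+1\in H(P)$ and the non-rational case are fine, but $q_0\in H(P)$ is unproven.
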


\section{Weierstrass semigroups for  curves (A)}

Let $m>2$ be a divisor of $q_{0}+1$, and suppose that $\Delta=\frac{q_{0}+1}{m}>3$ is a prime.
Let $\cX_{m}$ be the non-singular model of the plane curve over $\FF_{q}$ with affine equation
$$X^{2m}+X^{m}+Y^{q_{0}+1}=0.$$
\begin{proposition}[Section 3 in \cite{GHKT}]\label{n1} The curve $\cX_{m}$ has the following properties.
\begin{enumerate}
\item[\rm{(i)}] The genus of $\cX_{m}$ is $g=\frac{1}{2}m(q_{0}-2)+1$.
\item[\rm{(ii)}] $\cX_{m}$  is a maximal curve with
$$q+1+m(q_{0}-2)q_{0}+2q_{0}\geq 1+qm$$
$\FF_{q}$-rational points.
\item[\rm{(iii)}] If $\omega$ is a primitive m-th root of $-1$, then there exists an $\FF_{q}$-rational point $P$ such that
\begin{itemize}
\item $(\frac{1}{{\bar x}-\omega})_\infty=(q_0+1)P$;
\item $(\frac{{\bar x}^{-1}{\bar y}^\Delta}{{\bar x}-\omega})_\infty=(q_0+1-\Delta)P$;
\item for all $n=1,\ldots,\frac{\Delta-1}{2}$, $(\frac{{\bar y}^n}{{\bar x}-\omega})_\infty=(q_0+1-n)P$.
\end{itemize}
\end{enumerate}
\end{proposition}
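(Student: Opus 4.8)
The plan is to present $\FF_q(\cX_m)=\FF_q(\bar x,\bar y)$ as a Kummer extension of the rational function field $\FF_q(\bar x)$. Rewriting the defining relation as $\bar y^{\,q_0+1}=-\bar x^{m}(\bar x^{m}+1)$ and setting $f(\bar x)=-\bar x^{m}(\bar x^{m}+1)$, one checks that $q_0+1\mid q-1$ and $p\nmid(q_0+1)$, where $p=\mathrm{char}\,\FF_q$ (indeed $q_0\equiv0$, so $q_0+1\equiv1\ (\mathrm{mod}\ p)$). Hence this is a genuine degree-$(q_0+1)$ Kummer extension, and the standard ramification rule applies: the place of $\FF_q(\bar x)$ over $\bar x=c$ splits into $\gcd(q_0+1,v_c(f))$ places, each of ramification index $(q_0+1)/\gcd(q_0+1,v_c(f))$.

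For (i) I would run Riemann--Hurwitz. The ramification sits over the zeros and poles of $f$: over $\bar x=0$ (where $v_0(f)=m$) there are $m$ tame places of index $\Delta$; over $\bar x=\infty$ (where $v_\infty(f)=-2m$, and $\gcd(q_0+1,2m)=m$ since $\Delta$ is odd) again $m$ places of index $\Delta$; and over each of the $m$ roots of $\bar x^{m}=-1$, where $f$ has a simple zero, the extension is totally ramified, giving one place of index $q_0+1$. All of this is tame, so the different has degree $2m(\Delta-1)+mq_0$, and $2g-2=-2(q_0+1)+2m(\Delta-1)+mq_0$ collapses, via $m\Delta=q_0+1$, to $2g-2=m(q_0-2)$, i.e. $g=\tfrac12 m(q_0-2)+1$. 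Part (ii) is where I expect the genuine obstacle, since the genus alone does not force maximality. The cleanest route is to exhibit $\cX_m$ as an $\FF_q$-curve covered by the Hermitian curve $\cH$ (for instance as a quotient $\cH/G$) and to invoke the Kleiman--Serre covering theorem, by which any $\FF_q$-rational curve dominated by an $\FF_q$-maximal curve is itself maximal; the count $q+1+2gq_0$ then follows by inserting the genus from (i). Absent such a covering one would face the considerably more delicate evaluation of the character sum counting $a\in\FF_q$ for which $-(a^{2m}+a^{m})$ is a nonzero $(q_0+1)$-th power.

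For (iii) --- the part actually used in the sequel --- I would fix $\omega$ with $\omega^{m}=-1$; such $\omega$ lies in $\FF_q$ because $\FF_q$ contains the $2m$-th roots of unity. Put $P=(\omega,0)$, and write $F=X^{2m}+X^{m}+Y^{q_0+1}$. Then $\partial_X F(P)=m\,\omega^{m-1}(2\omega^m+1)=-m\,\omega^{m-1}\neq0$ while $\partial_Y F(P)=0$, so $P$ is smooth with tangent line $X=\omega$; hence $\bar y$ is a local uniformizer at $P$, giving $\ord_P(\bar y)=1$. Moreover $v_\omega(f)=1$, so by the Kummer rule $\bar x=\omega$ is totally ramified: $P$ is the unique place above it and $\ord_P(\bar x-\omega)=q_0+1$. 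Since $\bar x-\omega$ vanishes only over $\bar x=\omega$, this already yields $\big((\bar x-\omega)^{-1}\big)_\infty=(q_0+1)P$.

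The remaining two pole divisors follow by assembling the local data at the other special places: each of the $m$ places over $\bar x=0$ has $\ord(\bar x)=\Delta$, $\ord(\bar y)=1$, while each of the $m$ places over $\bar x=\infty$ has $\ord(\bar x)=-\Delta$, $\ord(\bar y)=-2$. At $P$ one computes $\ord_P\!\big(\bar x^{-1}\bar y^{\Delta}(\bar x-\omega)^{-1}\big)=\Delta-(q_0+1)$ and $\ord_P\!\big(\bar y^{n}(\bar x-\omega)^{-1}\big)=n-(q_0+1)$, giving the asserted pole orders. It then remains only to rule out competing poles. Over $\bar x=0$ the zero of $\bar y^{\Delta}$ (order $\Delta$) exactly cancels the pole of $\bar x^{-1}$, so the first function is regular there; at each place over $\bar x=\infty$ its three valuations sum to $\Delta-2\Delta+\Delta=0$. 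For $\bar y^{n}(\bar x-\omega)^{-1}$ the only danger is at infinity, where the valuation equals $\Delta-2n$; this is exactly where the hypothesis $n\le\tfrac{\Delta-1}{2}$ is used, forcing $\Delta-2n\ge1>0$ so that the function is regular (in fact vanishes) over infinity. Hence $\big(\bar x^{-1}\bar y^{\Delta}(\bar x-\omega)^{-1}\big)_\infty=(q_0+1-\Delta)P$ and $\big(\bar y^{n}(\bar x-\omega)^{-1}\big)_\infty=(q_0+1-n)P$, which completes (iii).
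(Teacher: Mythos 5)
First, a point of reference: the paper itself offers no proof of this proposition --- it is imported verbatim from Section 3 of \cite{GHKT} --- so there is no internal argument to compare against, only your reconstruction versus that external source. Your treatment of (i) and (iii) via the Kummer presentation $\bar y^{\,q_0+1}=-\bar x^{m}(\bar x^{m}+1)$ is correct and complete: the ramification data over $\bar x=0,\infty$ and over the $m$ simple roots of $\bar x^{m}+1$ is right (the computation $\gcd(q_0+1,2m)=m$ does use that $\Delta$ is odd, as you note, and the simple zeros guarantee both that the extension has full degree $q_0+1$ and that $\FF_q$ stays the exact constant field), Riemann--Hurwitz gives $2g-2=m(q_0-2)$, and the local valuations $\ord_P(\bar y)=1$, $\ord_P(\bar x-\omega)=q_0+1$ together with the bookkeeping at the places over $0$ and $\infty$ --- in particular $\Delta-2n\geq 1$ precisely when $n\leq\frac{\Delta-1}{2}$ --- yield exactly the three pole divisors asserted in (iii).

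The genuine gap is in (ii). You correctly observe that the genus alone does not force maximality and that the intended route is to realize $\cX_{m}$ as an $\FF_q$-curve covered by the Hermitian curve $\cH$ and invoke the Kleiman--Serre covering theorem; but you never exhibit the covering map, and constructing it is precisely the nontrivial content of \cite{GHKT} (whose very title is ``A family of curves covered by the Hermitian curve''). As written, part (ii) is an announced strategy rather than a proof: everything downstream (the point count $q+1+2gq_0$) hinges on a covering $\cH\to\cX_m$ defined over $\FF_q$ that is still missing. The remaining claim of (ii), the inequality $q+1+m(q_0-2)q_0+2q_0\geq 1+qm$, which you also pass over, is at least elementary: the difference of the two sides equals $q_0(q_0+2-2m)$, and $q_0+2\geq 2m$ holds because $m=\frac{q_0+1}{\Delta}\leq\frac{q_0+1}{5}$. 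So: (i) and (iii) stand as complete proofs; (ii) needs the explicit Hermitian covering before it can be accepted.
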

Let $P$ be as (iii) of Proposition \ref{n1}. Then the Weierstrass semigroup $H(P)$ contains the following numerical semigroup
$$\Theta=\left\langle q_{0}+1-\Delta, q_{0}+1-\frac{\Delta-1}{2}, q_{0}+1-\frac{\Delta-1}{2}+1, \ldots, q_{0}+1\right\rangle.$$
We show that actually $H(P)$ coincides with $\Theta$.
\begin{theorem}
$H(P)=\Theta$.
\end{theorem}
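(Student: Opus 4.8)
The plan is to play off the number of gaps of the candidate semigroup $\Theta$ against the genus of $\cX_{m}$. Recall the Weierstrass gap theorem: for a point $P$ on a curve of genus $g$ the semigroup $H(P)$ has exactly $g$ gaps, i.e. $\#(\NN_{0}\setminus H(P))=g$. As already observed before the statement, $\Theta\subseteq H(P)$, since the generators of $\Theta$ are non-gaps at $P$ by Proposition \ref{n1}(iii) and $H(P)$ is closed under addition. Hence every gap of $H(P)$ is a gap of $\Theta$, so $\#(\NN_{0}\setminus\Theta)\ge g$. It therefore suffices to prove the reverse inequality $\#(\NN_{0}\setminus\Theta)\le g$: together with the (finite) containment of gap sets this forces $\NN_{0}\setminus\Theta=\NN_{0}\setminus H(P)$, and hence $H(P)=\Theta$. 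In short, the theorem reduces to the purely combinatorial assertion that $\Theta$ has exactly $g=\tfrac12 m(q_{0}-2)+1$ gaps.

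To set up the count, write $N=q_{0}+1=m\Delta$ and $s=\tfrac{\Delta-1}{2}$, so that the generators of $\Theta$ are the single element $G_{0}=(m-1)\Delta=N-\Delta$ together with the block of consecutive integers $N-s,N-s+1,\dots,N$. Because these latter generators form a full integer interval, the sum of any $k$ of them realises \emph{every} integer in $[\,k(N-s),\,kN\,]$. Consequently the non-gaps of $\Theta$ are exactly the integers $aG_{0}+v$ with $a\ge 0$ and $v\in\bigcup_{k\ge 0}[\,k(N-s),\,kN\,]$. Since for $k$ large this interval has more than $G_{0}$ consecutive integers, its $G_{0}$-translates cover a half-line, so $\NN_{0}\setminus\Theta$ is finite and the gap count is well defined. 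Note also that $G_{0}=N-\Delta<N-s$ is the multiplicity of $\Theta$.

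I would then evaluate $\#(\NN_{0}\setminus\Theta)$ through the Ap\'ery set of $\Theta$ with respect to the multiplicity $G_{0}=(m-1)\Delta$, using Selmer's formula $\#(\NN_{0}\setminus\Theta)=\tfrac{1}{G_{0}}\sum_{\rho}w(\rho)-\tfrac{G_{0}-1}{2}$, where $w(\rho)$ denotes the least non-gap in the residue class $\rho$ modulo $G_{0}$. Modulo $G_{0}$ the interval generators $N-s,\dots,N$ occupy precisely the residues $s+1,s+2,\dots,\Delta$, while $G_{0}\equiv 0$; by the previous paragraph $w(\rho)$ is the smallest element of $\bigcup_{k}[\,k(N-s),\,kN\,]$ lying in the class $\rho$. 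One determines, class by class, the least admissible $k$ and the corresponding minimal value, then substitutes into the Ap\'ery formula. The expected outcome is $\#(\NN_{0}\setminus\Theta)=\tfrac12 m(q_{0}-2)+1=g$, which closes the argument. (Equivalently one may count gaps directly, block by block in $k$, using that all integers beyond about $2(m-1)(N-s)$ are non-gaps.)

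The main obstacle is exactly this last bookkeeping carried out uniformly in the two parameters $m$ and $\Delta$. The delicate classes are the small residues $\rho\in\{1,\dots,s\}$, which are hit by no single interval generator and first appear only for $k$ of size roughly $m$; for these one must verify that the minimal representative comes from the first admissible $k$ rather than from some larger $k$ carrying a smaller offset $\rho+jG_{0}$, and that adjoining copies of $G_{0}$ produces nothing smaller. Once the minimal representatives in all $G_{0}$ residue classes are pinned down, the sum $\sum_{\rho}w(\rho)$ is a finite, if intricate, arithmetic expression in $m$ and $\Delta$, and its evaluation to $\tfrac12 m(q_{0}-2)+1$ completes the proof.
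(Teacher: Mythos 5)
Your reduction is correct and is the same as the paper's: Proposition \ref{n1}(iii) gives $\Theta\subseteq H(P)$, the Weierstrass gap theorem gives $\#(\NN_0\setminus H(P))=g$, so it suffices to show $\Theta$ has at most $g$ gaps. Your structural description of $\Theta$ is also sound: with $N=q_0+1=m\Delta$, $s=\frac{\Delta-1}{2}$ and $G_0=N-\Delta=(m-1)\Delta$, the non-gaps are exactly the integers $aG_0+v$ with $v\in\bigcup_{k\ge0}[k(N-s),kN]$, and the Ap\'ery-set/Selmer route is viable in principle and genuinely different from the paper's bookkeeping.

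The problem is that the proof stops exactly where the theorem begins. Once the reduction is made, the entire content of the statement is the verification that the gap count equals $\frac12 m(q_0-2)+1$; you assert this as the ``expected outcome'' and explicitly defer both the determination of the minimal representatives $w(\rho)$ in the $(m-1)\Delta$ residue classes and the evaluation of $\sum_\rho w(\rho)$, calling this ``the main obstacle.'' In particular nothing pins down $w(\rho)$ for the classes $\rho\in\{1,\dots,s\}$ (and more generally for classes not hit by a single interval generator, which require $k$ of size comparable to $m$), and the resulting expression is never shown to collapse to $g$. For comparison, the paper does carry out the analogous count, and more directly: writing $G(k)$ for the set of sums of $k$ generators, it shows that for $k<m$ the blocks $G(k-1)$ and $G(k)$ are separated by exactly $q_0-k\Delta$ missing integers, that each $G(k)$ misses at most $\frac{\Delta-1}{2}$ integers of the interval it spans, and that beyond $G(m)$ there are no further gaps; summing $m\frac{\Delta-1}{2}+\sum_{i=1}^{m-1}(q_0-i\Delta)$ gives exactly $g$. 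Until you supply the corresponding computation in your framework, what you have is a plan rather than a proof.
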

\begin{proof}
To prove the assertion we show that the number of gaps in $\Theta$, that is, the number of integers in $\N\setminus\Theta$, is equal to the genus $g$ of $\cX_{m}$.
Let $G=\left\{q_{0}+1-\Delta, q_{0}+1-\frac{\Delta-1}{2}, q_{0}+1-\frac{\Delta-1}{2}+1, q_{0}+1-\frac{\Delta-1}{2}+2,\right.$
$\left.\ldots, q_{0}+1\right\}$, and for $s\in\N_{0}$ let $G(s)=\left\{ig_{1}+jg_{2} | g_{k}\in G, i+j=s\right\}$.
Note that, if $s<m$, then the intersection of two of these sets is always empty. In fact, the largest integer of $G(s-1)$ is $(s-1)(q_{0}+1)$, and it is smaller than the smallest integer of $G(s)$, that is $s(q_{0}+1-\Delta)$.
So, the number of gaps between $G(s-1)$ and $G(s)$ is exactly $s(q_{0}+1-\Delta)-(s-1)(q_{0}+1)-1=q_{0}-s\Delta$. Now we show that the number of gaps in $G(s)$ is at most $\frac{\Delta-1}{2}$. Let $v\in G(s)$. It is easily seen that $G(s)$ contains each $v$ such that $s(q_{0}+1-\frac{\Delta-1}{2})\leq v\leq s(q_{0}+1)$.
Moreover, if $s(q_{0}+1)-s\Delta+r\Delta-r\frac{\Delta-1}{2}\leq v\leq s(q_{0}+1)-s\Delta+r\Delta$, then
$$v=(s-r)(q_{0}+1-\Delta)+(q_{0}+1-\frac{\Delta-1}{2}+h)+(q_{0}+1-\frac{\Delta-1}{2}+k),$$
where $h,k\in\left\{0,\ldots,m-1\right\}$, and $r\in\left\{0,\ldots,s-1\right\}$.
Hence, $G(s)$ contains every integer greater than $(s-1)(q_{0}+1-\Delta)+(q_{0}+1-\frac{\Delta-1}{2})$, and less than $s(q_{0}+1-\frac{\Delta-1}{2})$.
For the same reason, the number of gaps in $G(m)$ is at most $\frac{\Delta-1}{2}$.
In particular, the greatest gap in $G(m)$ is less than $(m-1)(q_{0}+1-\Delta)+(q_{0}+1-\frac{\Delta-1}{2})<2g$, and the greatest $v\in G(m)$ is such that $v>2g$.
Therefore, we have at most
$$m\frac{\Delta-1}{2}+\sum_{i=1}^{m-1}(q_{0}-i\Delta)=m\frac{\Delta-1}{2}+q_{0}(m-1)-\Delta \frac{m(m-1)}{2}=g$$
gaps less than $2g$.
This shows that $\Theta\cap\left[0, 2g\right]=H(\gamma)\cap\left[0, 2g\right]$.
To complete the proof, we need to show that $\Theta$ contains every integer greater than $2g$. This follows from the fact that if $s\geq m$, then $G(s)\cap G(s+1)\neq\emptyset$; moreover, $G(s)$ contains the gaps of $G(s+1)$, being $s(q_{0}+1)>(s+1)(q_{0}+1)-\Delta-\frac{\Delta-1}{2}$. This completes the proof.
\end{proof}

\section{Weierstrass semigroups for curves (B)}\label{curva2}

Let $q_{0}$ be a prime power such that $\Delta=\frac{q_{0}+1}{2}$ is a prime greater than $3$.
Let $\cX_{i}$ be the non-singular model of the curve over $\FF_{q}$ with affine equation
$$X^{2i+2}+X^{2i}+Y^{q_{0}+1}=0,$$
where $1\leq i\leq \Delta-2$.

\begin{proposition}[\cite{GHKT2}] The curve
$\cX_{i}$ has the following properties.
\begin{enumerate}
\item[$(i)$] $\cX_i$ is maximal.
\item[$(ii)$] The genus of $\cX_{i}$ is $g=q_{0}-1$.
\item[$(iii)$] Let $n_{1},n_{2},\ldots, n_{\frac{\Delta-1}{2}}$ be the integers such that $0<n_{j}<\Delta$, and $$n_{j}(i+1)\leq \left(\left\lfloor \frac{n_{j}i}{\Delta}\right\rfloor +1 \right)\Delta,$$ for $j\in\left\{1,2,\ldots,\frac{\Delta-1}{2}\right\}$.
\begin{enumerate}
\item[$\bullet$] There exists an $\FF_{q}$-rational point $P_{1}$ of $\cX_i$ such that the Weierstrass semigroup $H(P_{1})$ contains the following integers
$$q_{0}+1, q_{0}+1-n_{1}, q_{0}+1-n_{2}, \ldots, q_{0}+1-n_{\frac{\Delta-1}{2}}, q_{0}+1-\Delta;$$
\item[$\bullet$] there exists an $\FF_{q}$-rational point $P_{2}$ of $\cX_i$ such that the Weierstrass semigroup $H(P_{2})$ contains the following integers
$$q_{0}+1, q_{0}+1-m_{1}, q_{0}+1-m_{2}, \ldots, q_{0}+1-m_{\frac{\Delta-1}{2}}, q_{0}+1-\Delta,$$
where $m_{j}=n_{j}i-\Delta\left\lfloor \frac{n_{j}i}{\Delta}\right\rfloor$;
\item[$\bullet$] there exists an $\FF_{q}$-rational point $P_{3}$ of $\cX_i$ such that the Weierstrass semigroup $H(P_{3})$ contains the following integers
$$q_{0}+1, q_{0}+1-k_{1}, q_{0}+1-k_{2}, \ldots, q_{0}+1-k_{\frac{\Delta-1}{2}}, q_{0}+1-\Delta,$$
where $k_{j}=\Delta\left(\left\lfloor \frac{n_{j}i}{\Delta}\right\rfloor+1\right)-n_{j}(i+1)$.
\end{enumerate}
\end{enumerate}
\end{proposition}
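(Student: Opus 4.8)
The plan is to treat the three assertions separately, reserving most of the effort for (iii). For (i) and (ii) I would first present $\cX_{i}$ as the Kummer cover $y^{q_{0}+1}=-x^{2i}(x^{2}+1)$ of the projective $x$-line. The right-hand side has divisor $2i\,[0]+[\beta]+[-\beta]-(2i+2)[\infty]$, where $\beta^{2}=-1$ (and $\beta\in\FF_{q}$, since $\Delta=(q_{0}+1)/2\in\ZZ$ forces $q_{0}$ odd, hence $q=q_{0}^{2}\equiv 1\pmod 4$). Writing $q_{0}+1=2\Delta$ and using that $\Delta$ is prime with $1\le i\le\Delta-2$, one has $\gcd(2\Delta,2i)=\gcd(2\Delta,2i+2)=2$ and $\gcd(2\Delta,1)=1$; hence the cover is tamely ramified of index $\Delta$ over $x=0$ and over $x=\infty$ (two points each) and totally ramified of index $2\Delta$ over $x=\pm\beta$ (one point each). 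Riemann--Hurwitz then gives $2g-2=2\Delta(-2)+2(\Delta-1)+2(2\Delta-1)+2(\Delta-1)=4\Delta-6$, so $g=2\Delta-2=q_{0}-1$, which is (ii). For (i) I would invoke the covering criterion of Kleiman--Serre: exhibiting an $\FF_{q}$-rational map onto $\cX_{i}$ from the Hermitian curve $\cH$ (realising $\cX_{i}$ as a quotient of $\cH$) forces $\cX_{i}$ to be maximal; alternatively one verifies $\#\cX_{i}(\FF_{q})=q+1+2gq_{0}$ directly by a character-sum count over the fibres of $x$.

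For (iii) the key is Proposition~\ref{FGT4}: since $q_{0}$ and $q_{0}+1$ are automatically non-gaps at every $\FF_{q}$-rational point, and since $q_{0}+1-j$ is a non-gap whenever $j$ is a $(\cD,P)$-order of the Frobenius series, it suffices to produce at each of $P_{1},P_{2},P_{3}$ rational functions with prescribed pole orders and no other poles. The ramification analysis already lists the candidate points: the totally ramified points over $x=\pm\beta$, with $\ord(y)=1$ and $\ord(x\mp\beta)=q_{0}+1$; and, over $x=0$ and $x=\infty$, two $\FF_{q}$-rational points each, separated by the unit $W=y^{\Delta}/x^{i}$ (respectively $y^{\Delta}/x^{i+1}$), which satisfies $W^{2}=-(x^{2}+1)$ and takes the values $\pm\beta$. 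I would take $P_{1}$ to be a totally ramified point (over $x=\beta$), where isolation of a pole is automatic because the fibre is a single point.

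At $P_{1}$ consider $f_{n}=y^{\,n}/\bigl(x^{\lfloor ni/\Delta\rfloor}(x-\beta)\bigr)$. Its order at $P_{1}$ is $n-(q_{0}+1)=-(q_{0}+1-n)$, and checking regularity at the remaining special points reduces exactly to $n+(ni\bmod\Delta)\le\Delta$, which rearranges to the stated inequality $n(i+1)\le(\lfloor ni/\Delta\rfloor+1)\Delta$; thus $q_{0}+1-n\in H(P_{1})$ for these $n$, together with $n=\Delta$ giving $q_{0}+1-\Delta$. That there are exactly $\tfrac{\Delta-1}{2}$ admissible $n$ in $\{1,\dots,\Delta-1\}$ follows from the involution $n\mapsto\Delta-n$, under which $ni\bmod\Delta\mapsto\Delta-(ni\bmod\Delta)$, so that $n+(ni\bmod\Delta)$ and its image sum to $2\Delta$; the boundary case $n+(ni\bmod\Delta)=\Delta$ would mean $\Delta\mid n(i+1)$, which is excluded because $\gcd(i+1,\Delta)=1$, so each orbit contributes exactly one admissible value. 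For $P_{2}$ (over $x=0$, where $|\ord(y)|=i$) and $P_{3}$ (over $x=\infty$, where $|\ord(y)|=i+1$) the same idea applies, but now the pole at the companion point of the fibre must be removed by multiplying with a power of $W\pm\beta$: for instance $(W+\beta)/x$ has a single pole, of order $\Delta=q_{0}+1-\Delta$, at the point of the fibre over $0$ where $W=\beta$. Balancing the extra zeros and poles this introduces over the remaining fibres replaces the exponent $n$ by its residue $ni\bmod\Delta=m$ at $P_{2}$ and by $\Delta-n-m=k$ at $P_{3}$, which are precisely the quantities $m_{j}$ and $k_{j}$ of the statement (note $n_{j}+m_{j}+k_{j}=\Delta$).

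The main obstacle is exactly this last step. At the fibres over $0$ and $\infty$, which consist of two conjugate $\FF_{q}$-rational points, one cannot isolate a single pole by a monomial in $x,y$, and the bookkeeping needed to show that the multiplications by $W\pm\beta$ (together with compensating powers of $x$) convert the clean list $\{n_{j}\}$ at $P_{1}$ into $\{m_{j}\}$ and $\{k_{j}\}$—while keeping every pole order in the admissible range $[0,q_{0}+1]$ and every other local order non-negative—is delicate. A cleaner alternative I would pursue in parallel is to compute the $(\cD,P_{\ell})$-orders of the Frobenius series intrinsically and read off the non-gaps through Proposition~\ref{FGT4}(iii); this trades the explicit function-juggling for a determination of the orders of $\cD$ at the three points, where the multiplier $i$ enters through the local behaviour of $y$, whose orders at $P_{1},P_{2},P_{3}$ are $1$, $i$, and $-(i+1)$.
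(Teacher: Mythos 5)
The paper does not actually prove this proposition: it is quoted verbatim from \cite{GHKT2}, so there is no internal argument to compare yours against. Judged on its own, your proposal is sound where it is carried out. The Riemann--Hurwitz computation for (ii) is correct: with $q_0+1=2\Delta$, $\Delta$ prime and $1\le i\le\Delta-2$, the fibres over $x=0$ and $x=\infty$ each split into two points of ramification index $\Delta$, the fibres over $x=\pm\beta$ are totally ramified, and tameness gives $2g-2=4\Delta-6$, i.e.\ $g=q_0-1$. Likewise the $P_1$ part of (iii) is essentially complete: $f_n=y^n/(x^{\lfloor ni/\Delta\rfloor}(x-\beta))$ has its only pole at the totally ramified point over $x=\beta$, of order $q_0+1-n$, exactly when $n+(ni\bmod\Delta)\le\Delta$, which rearranges to the stated inequality, and your pairing $n\leftrightarrow\Delta-n$ (with the boundary case excluded because $\gcd(n,\Delta)=\gcd(i+1,\Delta)=1$) correctly yields $\frac{\Delta-1}{2}$ admissible values.

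Two genuine gaps remain. First, (i) is only asserted: you invoke the Kleiman--Serre covering criterion, but the whole difficulty is to exhibit an $\FF_{q}$-rational covering $\cH\to\cX_{i}$ --- this is precisely the nontrivial content of \cite{GHKT2} --- and the alternative character-sum count is not performed either; as written, maximality is unproved. Second, you yourself flag that the $P_{2}$ and $P_{3}$ cases of (iii) are not completed. The idea of separating the two points of the fibres over $x=0$ and $x=\infty$ by means of $W=y^{\Delta}/x^{i}$ and the factors $W\pm\beta$ (using $(W-\beta)(W+\beta)=-x^{2}$), together with the identity $n_{j}+m_{j}+k_{j}=\Delta$, is the right mechanism for converting the list $\{n_{j}\}$ into $\{m_{j}\}$ and $\{k_{j}\}$; but until the divisors of the resulting functions are written out and every local order away from $P_{2}$ (resp.\ $P_{3}$) is checked to be non-negative, the second and third bullets of (iii) are not established. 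In sum: a correct proof of (ii) and of the first bullet of (iii), and an honest but incomplete sketch of the remainder.
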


It is easily seen that, if $i=1$, then $n_{j}=m_{j}=j$, and $k_{j}=\Delta-2j$, for $j\in\left\{1,2,\ldots,\frac{\Delta-1}{2}\right\}$. So, from the above Proposition we have just two different Weierstrass semigroup of $\cX_{1}$, namely $H(P_{1})$ and $H(P_{3})$.

\begin{theorem}\label{sgr1}
Assume that $i=1$, and let
$$\Theta=\left\langle q_{0}+1, (q_{0}+1)-1, (q_{0}+1)-2, \ldots, (q_{0}+1)-\frac{\Delta-1}{2}, q_{0}+1-\Delta\right\rangle.$$
Then $H(P_{1})=\Theta.$
\end{theorem}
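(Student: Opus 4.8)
The plan is to reduce the claim to a count of gaps in the numerical semigroup $\Theta$. The inclusion $\Theta\subseteq H(P_1)$ is already in hand: setting $i=1$ in the Proposition gives $n_j=j$, so the integers $q_0+1,\,q_0+1-1,\ldots,q_0+1-\tfrac{\Delta-1}{2},\,q_0+1-\Delta$ all lie in $H(P_1)$, and these are exactly the generators of $\Theta$. Since $H(P_1)$ is a Weierstrass semigroup at a point of a curve of genus $g=q_0-1$, it has precisely $g$ gaps, and $\Theta\subseteq H(P_1)$ forces every gap of $H(P_1)$ to be a gap of $\Theta$. Hence it suffices to show that $\Theta$ has at most $g=q_0-1$ gaps: the two gap sets are then nested with equal cardinality, so $\Theta=H(P_1)$. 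This mirrors the strategy used for curves (A).

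The computation is streamlined by the identity $\Delta=\tfrac{q_0+1}{2}$, which makes the smallest generator $q_0+1-\Delta$ equal to $\Delta$ and the largest equal to $q_0+1=2\Delta$. Thus the generators of $\Theta$ are $\Delta$ together with the consecutive block $\{2\Delta-t,2\Delta-t+1,\ldots,2\Delta\}$, where $t:=\tfrac{\Delta-1}{2}$, and the multiplicity of $\Theta$ is the prime $\Delta$. I would compute the genus of $\Theta$ from its Ap\'ery set with respect to $\Delta$, i.e.\ the residue-minimal elements $w_0=0,w_1,\ldots,w_{\Delta-1}$, via the standard formula $g(\Theta)=\tfrac{1}{\Delta}\sum_{r=0}^{\Delta-1}w_r-\tfrac{\Delta-1}{2}$. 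Writing $u:=\tfrac{\Delta+1}{2}=\Delta-t$, a single block-generator realizes precisely the residues $0$ and $u,u+1,\ldots,\Delta-1$, the residue $r\in\{u,\ldots,\Delta-1\}$ being attained at the value $\Delta+r$; the residues $r\in\{1,\ldots,u-1\}$ first appear in a sum of two block-generators, at the value $3\Delta+r$. This should give $w_r=\Delta+r$ for $u\le r\le\Delta-1$ and $w_r=3\Delta+r$ for $1\le r\le u-1$, and substituting into the genus formula should return $g(\Theta)=2(\Delta-1)=q_0-1$, as required.

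The step I expect to be delicate is the minimality of these Ap\'ery representatives, i.e.\ verifying that no element of $\Theta$ smaller than $3\Delta+r$ is congruent to $r$ for $r\in\{1,\ldots,u-1\}$. This reduces to an explicit description of $\Theta$ in the low intervals $[\Delta,2\Delta)$ and $[2\Delta,3\Delta)$, which I would check contain only the residues $0$ and $u,\ldots,\Delta-1$, so that residue $r\in\{1,\ldots,u-1\}$ cannot occur below $3\Delta+r$; here one also rules out three or more block-generators undercutting the bound, since their sum is at least $3(2\Delta-t)=\tfrac{9\Delta+3}{2}$, which exceeds $3\Delta+(u-1)=\tfrac{7\Delta-1}{2}$. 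Should the Ap\'ery bookkeeping prove cumbersome, the alternative is to follow the (A)-proof verbatim: set $G(s)$ to be the set of sums of exactly $s$ generators, bound the gaps lying between $G(s-1)$ and $G(s)$ and those internal to each $G(s)$, and show that consecutive blocks begin to overlap (covering all large integers) once $s$ is large enough; the sum of these local gap-counts should again total $q_0-1$.
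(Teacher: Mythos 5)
Your proposal is correct, and the computational core is genuinely different from the paper's. Both arguments share the same skeleton: the inclusion $\Theta\subseteq H(P_1)$ comes from the proposition (with $n_j=j$ for $i=1$), and equality follows once one shows $\Theta$ has at most $g=q_0-1$ gaps, since the gap set of $H(P_1)$ is contained in that of $\Theta$ and already has cardinality $g$. Where you diverge is in how the gap count is carried out. The paper partitions the relevant range of integers using the sets $G(s)$ of sums of exactly $s$ generators, bounds the gaps internal to $G(1)$ and $G(2)$ and the gaps in the interval separating them, checks that $G(2)$ is a solid interval up to $2(q_0+1)>2g$, and sums the local counts to get $q_0-1$. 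You instead exploit the identity $q_0+1-\Delta=\Delta$, so that $\Theta$ has multiplicity $\Delta$ generated by $\Delta$ together with the consecutive block $\{2\Delta-t,\ldots,2\Delta\}$ with $t=\frac{\Delta-1}{2}$, and compute the Ap\'ery set with respect to $\Delta$: your values $w_r=\Delta+r$ for $u\le r\le\Delta-1$ and $w_r=3\Delta+r$ for $1\le r\le u-1$ (with $u=\frac{\Delta+1}{2}$) are correct, the minimality check you flag as delicate does go through exactly as you describe (one block generator only realizes residues $0,u,\ldots,\Delta-1$ below $3\Delta$, and three or more block generators already exceed $3\Delta+u-1$), and Selmer's formula then yields $g(\Theta)=\tfrac{5(\Delta-1)}{2}-\tfrac{\Delta-1}{2}=2(\Delta-1)=q_0-1$ exactly. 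What your route buys is an exact genus computation in one closed formula, with no separate argument that $\Theta$ contains all integers beyond $2g$ (the Ap\'ery set encodes this automatically) and no ``at most'' bookkeeping to reconcile at the end; what the paper's route buys is uniformity with the argument for curves (A), where the smallest generator is not the multiplicity-friendly value $\Delta$ and an Ap\'ery computation would be less clean. To turn your plan into a finished proof you only need to write out the two verifications you already sketched, so I see no gap.
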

\begin{proof}
To prove the assertion we show that the number of gaps in $\Theta$ is equal to the genus $g$ of $\cX_{1}$. Let $G=\left\{q_{0}+1, (q_{0}+1)-1, (q_{0}+1)-2, \ldots, (q_{0}+1)-\frac{\Delta-1}{2}, q_{0}+1-\Delta\right\}$, and for $s\in\left\{0,1,2\right\}$ let $G(s)=\left\{ig_{1}+jg_{2} | g_{k}\in G, i+j=s\right\}$.
Note that $G(0)=\left\{0\right\}$, $G(1)=G$, and that the number of gaps in $G(1)$ is at most $\frac{\Delta-1}{2}$. Moreover,
$G(1)\cap G(2)=\emptyset$. In fact, the largest integer of $G(1)$ is $q_{0}+1$, and it is smaller than the smallest integer of $G(2)$, that is $(q_{0}+1-\Delta)+(q_{0}+1-\frac{\Delta-1}{2})=\frac{5(q_{0}+1)}{4}+\frac{1}{2}$. So, the number of gaps between $G(1)$ and $G(2)$ is exactly $(q_{0}+1-\Delta)+(q_{0}+1-\frac{\Delta-1}{2})-(q_{0}+1)-1=\frac{q_{0}+1}{4}-\frac{1}{2}$.
Now we show that $\left(\N\cap\left[\frac{5(q_{0}+1)}{4}+\frac{1}{2}, 2(q_{0}+1)\right]\right)\setminus G(2)=\emptyset$. Let $v\in G(2)$.
\begin{enumerate}
\item[$\bullet$] If $\frac{5(q_{0}+1)}{4}+\frac{1}{2}\leq v\leq \frac{3(q_{0}+1)}{2}$, then $v=q_{0}+1-\Delta+(q_{0}+1-\frac{\Delta-1}{2}+h)$, where $h\in\left\{0,\ldots,\frac{\Delta-1}{2}\right\}$;
\item[$\bullet$] if $\frac{3(q_{0}+1)}{2}+1\leq v\leq 2(q_{0}+1)$, then
$v=(q_{0}+1-\frac{\Delta-1}{2}+h)+(q_{0}+1-\frac{\Delta-1}{2}+k)$, where $h, k\in\left\{0,\ldots,\frac{\Delta-1}{2}\right\}$.
\end{enumerate}
Moreover, since that the largest integer in $G(2)$, that is $2(q_{0}+1)$, is smaller than $2g=2(q_{0}-1)$, we have at most
$$(q_{0}+1-\Delta-1)+\frac{\Delta-1}{2}+(\frac{q_{0}+1}{4}-\frac{1}{2})=q_{0}-1=g$$
gaps less than $2g$.
This shows that $\Theta\cap\left[0, 2g\right]=H(\gamma_{1})\cap\left[0, 2g\right]$.
To complete the proof, we need to show that $\Theta$ contains every integer greater than $2g$. Consider now $G(s)$, for $s\geq 3$.
We can observe that in $G(s)$ there is no gap, and that between $G(2)$ and $G(3)$ there is no integer. Also, it is easily seen that for $s>2$, $G(s)\cap G(s+1)\neq\emptyset$ holds. This completes the proof.
\end{proof}

\begin{theorem}\label{sgr2}
Assume that $i=1$, and let
 $$\Gamma=\left\langle q_{0}+1-\Delta, q_{0}+1-(\Delta-2), q_{0}+1-(\Delta-4),\ldots, q_{0}, q_{0}+1 \right\rangle.$$
Then $H(P_{3})=\Gamma$.
\end{theorem}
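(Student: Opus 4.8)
The plan is to follow the scheme of the proof of Theorem~\ref{sgr1}. Since the above Proposition, specialized to $i=1$ where $k_j=\Delta-2j$, shows that $H(P_3)$ contains all the generators of $\Gamma$, we have $\Gamma\subseteq H(P_3)$; because a Weierstrass semigroup at a point of a curve of genus $g=q_0-1$ has exactly $g$ gaps, it will suffice to prove that $\N\setminus\Gamma$ has exactly $g=q_0-1$ elements. Indeed, then the two complements $\N\setminus H(P_3)\subseteq\N\setminus\Gamma$, one contained in the other and of equal finite cardinality, must coincide, forcing $\Gamma=H(P_3)$. Writing $q_0+1=2\Delta$, the generators become $\Delta,\ \Delta+2,\ \Delta+4,\ \ldots,\ 2\Delta-1,\ 2\Delta$, and the crucial structural remark I would record first is that $2\Delta$ is the only even generator, all the others being odd since $\Delta$ is an odd prime.

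Next I would set $G$ equal to the set of generators and, exactly as in Theorem~\ref{sgr1}, put $G(s)=\{g_1+\cdots+g_s : g_k\in G\}$, so that $\Gamma=\bigcup_{s\ge 0}G(s)$ and $G(s)\subseteq[s\Delta,\,2s\Delta]$. I would then analyse the first two levels. The level $G(1)=G$ lies in $[\Delta,2\Delta]$ and there omits exactly the even integers $\Delta+1,\Delta+3,\ldots,2\Delta-2$, which are $\frac{\Delta-1}{2}$ in number. For $G(2)\subseteq[2\Delta,4\Delta]$ the parity remark does the work: a sum of two generators is odd precisely when exactly one summand equals the even generator $2\Delta$, so the odd elements of $G(2)$ are exactly $2\Delta+\{\Delta,\Delta+2,\ldots,2\Delta-1\}=\{3\Delta,3\Delta+2,\ldots,4\Delta-1\}$, while the sums of two odd generators sweep out all even integers of $[2\Delta,4\Delta-2]$ and $2\Delta+2\Delta=4\Delta$ closes the even part. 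Thus $G(2)$ contains every even integer of $[2\Delta,4\Delta]$ but none of the odd integers $2\Delta+1,2\Delta+3,\ldots,3\Delta-2$, again $\frac{\Delta-1}{2}$ in number.

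With these two computations the count becomes immediate. Because $G(s)\subseteq[s\Delta,2s\Delta]$, the levels are range-separated below $3\Delta$: an integer $<2\Delta$ can come only from $G(1)$, and an integer $<3\Delta$ only from $G(1)\cup G(2)$. Hence the gaps of $\Gamma$ are precisely the $\Delta-1$ integers $1,\ldots,\Delta-1$ below the smallest generator, the $\frac{\Delta-1}{2}$ even integers missing from $G(1)$ in $[\Delta,2\Delta]$, and the $\frac{\Delta-1}{2}$ odd integers missing from $G(2)$ in $[2\Delta,3\Delta]$. Finally, since $G(2)$ already contains the whole interval $[3\Delta,4\Delta]$, which consists of more than $\Delta$ consecutive integers and $\Delta\in\Gamma$, every integer $\ge 3\Delta$ lies in $\Gamma$, so there are no further gaps. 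Summing,
$$(\Delta-1)+\frac{\Delta-1}{2}+\frac{\Delta-1}{2}=2(\Delta-1)=q_0-1=g,$$
which completes the argument.

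The only genuinely delicate point, and the one I would write out most carefully, is the bookkeeping certifying that each apparent omission at a given level is a true gap of $\Gamma$, i.e. that no higher level can supply it; this rests entirely on the range separation $G(s)\subseteq[s\Delta,2s\Delta]$, which in turn uses that $\Delta$ is large compared with the spread of the generators. The parity reduction, namely that $2\Delta=q_0+1$ is the unique even generator, is what makes the description of $G(2)$ and the claim $[3\Delta,4\Delta]\subseteq\Gamma$ transparent; once these are in place, matching the gap count with $g$ is a routine verification.
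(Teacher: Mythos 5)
Your proof is correct and follows essentially the same route as the paper's: both set $h=q_0+1-\Delta=\Delta$, decompose $\Gamma$ into the level sets $G(s)$, count $\Delta-1$ gaps below the first generator and $\frac{\Delta-1}{2}$ gaps in each of the levels $s=1,2$, and observe that everything from $3\Delta$ on is covered, giving exactly $g=q_0-1$ gaps. Your parity bookkeeping for $G(2)$ and the explicit range-separation argument are just a cleaner write-up of the same computation the paper performs by exhibiting representations $v=h+(h+2i)$ and $v=(h+2i)+(h+2j)$.
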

\begin{proof}
We show that the number of gaps in $\Gamma$ is equal to the genus $g$ of $\cX_{1}$. Let $h=q_{0}+1-\Delta$, and let
$G=\left\{h, h+2, h+4, \ldots, 2h-1, 2h\right\}$. Clearly, $\Gamma$ is generated by $G$.
The number of gaps less than the first non-zero nongap is $h-1$, and the number of gaps in $G$ is at most $\frac{h-1}{2}$.
For $s\in\left\{0,1,2\right\}$ let $G(s)=\left\{ig_{1}+jg_{2} | g_{k}\in G, i+j=s\right\}$. Note that $G(0)=\left\{0\right\}$, $G(1)=G$, and that the number of gaps in $G(2)$ is at most $\frac{h-1}{2}$. In fact, for $v\in G(2)$, we have that if $v\leq 3h$, then $v=h+(h+2i)$ for some $i\in\left\{1,\ldots, \frac{h-1}{2}\right\}$, and if $3h-1<v\leq 4h$, then $v=(h+2i)+(h+2j)$ for some $i,j\in\left\{0,1,\ldots\frac{h-1}{2}\right\}$.
Since that $4h=2(q_{0}+1)>2g$, we have at most
$$2(h-1)=g$$
gaps less than $2g$. Moreover, it is easily seen that $\Gamma$ contains every integer greater than $2g$. 
\end{proof}

Consider the curve $\cX_{i}$, when $q_{0}=9$, and $\Delta=5$.
We limit ourselves to the  the case $i=1$, since for $i=2$ and $i=3$ the same Weierstrass semigroups are obtained.
The curve $\cX_{1}$ has equation
\begin{equation}\label{curva2p}
X^{4}+X^{2}+Y^{10}=0,
\end{equation}
its genus is $g=8$, and the number of its $\FF_{81}$-rational points is $226$.
By Theorems \ref{sgr1} and \ref{sgr2} there exist two $\FF_{q}$-rational points $P_1$ and $P_{3}$ such that $H(P_{1})=\left\langle 5,8,9\right\rangle$ and $H(P_{3})=\left\langle 5,7,9\right\rangle$.

In Appendix, the  Improved AG codes associated to the curve (\ref{curva2p}) with respect to $P_3$ will be referred to as codes $(B1)$.

\section{Weierstrass semigroups for  curves (C)}

Let $s$ be a divisor of $q_{0}$.
Consider $H=\left\{c\in\FF_{q} |\, c^{q_{0}}+c=0\right\}<(\FF_{q}, +)$, and let $H_{s}$ be any additive subgroup of $H$ with $s$ elements.
Let $\cX$ be the curve obtained as the image of the Hermitian curve by the following rational map
\begin{equation}\label{quo}
\varphi : \cH\rightarrow\cX,\,\,\, (x,y)\mapsto (t,z)=(\prod_{a\in H_{s}}(x+a), y).
\end{equation}

\begin{proposition}[\cite{GSX}]\label{genus} The genus
$g$ of $\cX$ is equal to $\frac{1}{2}q_{0}(\frac{q_{0}}{s}-1).$
\end{proposition}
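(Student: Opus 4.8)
The plan is to realize $\cX$ as the quotient of $\cH$ by the group of automorphisms induced by $H_{s}$, and then to apply the Riemann--Hurwitz genus formula. First I would verify that for each $a\in H_{s}$ the map $\sigma_{a}:(x,y)\mapsto(x+a,y)$ is an automorphism of $\cH$: since $a^{q_{0}}+a=0$ we get $(x+a)^{q_{0}}+(x+a)=x^{q_{0}}+x$, so the defining equation (\ref{Hcurve}) is preserved. Thus $H_{s}$ embeds into $\Aut(\cH)$ as a group of order $s$. Because $t=\prod_{a\in H_{s}}(x+a)$ and $z=y$ are invariant under every $\sigma_{a}$ (the translation permutes $H_{s}$), the subfield $\FF_{q}(t,z)$ lies in the fixed field $\FF_{q}(\cH)^{H_{s}}$; and since $\prod_{a\in H_{s}}(X+a)$ is a separable additive polynomial of degree $s$ with kernel $H_{s}$, the extension $\FF_{q}(x,y)/\FF_{q}(t,z)$ has degree exactly $s$. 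Hence $\FF_{q}(\cX)=\FF_{q}(t,z)$ is precisely the fixed field, and $\varphi$ is the quotient map of a Galois cover of degree $s$.

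Next I would set up Riemann--Hurwitz. It is classical that $\cH$ has genus $g_{\cH}=\tfrac12 q_{0}(q_{0}-1)$ and a single point at infinity $P_{\infty}$, at which $\ord_{P_{\infty}}(x)=-(q_{0}+1)$ and $\ord_{P_{\infty}}(y)=-q_{0}$. The formula reads
$$2g_{\cH}-2=s\,(2g-2)+\deg(\mathfrak{d}),$$
where $\mathfrak{d}$ is the different of $\FF_{q}(\cH)/\FF_{q}(\cX)$, so the whole problem reduces to computing $\deg(\mathfrak{d})$. I would first show the cover is unramified away from infinity: over a point of $\cX$ with finite $t$-coordinate the fibre is the set of $s$ distinct roots of $\prod_{a}(X+a)=t$ (distinct by separability), while over the point at infinity of $\cX$ the whole group $H_{s}$ fixes $P_{\infty}$, so the cover is totally, and (as $s$ is a power of the characteristic) wildly, ramified there. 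Therefore $\mathfrak{d}$ is supported at the single rational place $P_{\infty}$, and $\deg(\mathfrak{d})$ equals its different exponent $d$.

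The heart of the argument, and the step I expect to be the main obstacle, is this wild different computation. Here I would use that $\pi=y/x$ is a local uniformizer at $P_{\infty}$, since $\ord_{P_{\infty}}(y/x)=-q_{0}+(q_{0}+1)=1$, and compute, for $a\in H_{s}\setminus\{0\}$,
$$\sigma_{a}(\pi)-\pi=\frac{y}{x+a}-\frac{y}{x}=\frac{-a\,y}{x(x+a)},$$
whose order at $P_{\infty}$ is $-q_{0}+2(q_{0}+1)=q_{0}+2$, independently of $a$. Consequently the ramification groups in lower numbering satisfy $G_{i}=H_{s}$ for $0\le i\le q_{0}+1$ and $G_{i}=\{1\}$ for $i\ge q_{0}+2$, so by the different formula via ramification groups
$$d=\sum_{i\ge 0}\bigl(|G_{i}|-1\bigr)=(q_{0}+2)(s-1).$$

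Finally I would substitute into Riemann--Hurwitz: from $q_{0}(q_{0}-1)-2=s\,(2g-2)+(q_{0}+2)(s-1)$ one gets $s\,(2g-2)=q_{0}^{2}-q_{0}s-2s$, whence $2g-2=\tfrac{q_{0}^{2}}{s}-q_{0}-2$ and $g=\tfrac12 q_{0}\bigl(\tfrac{q_{0}}{s}-1\bigr)$, as claimed. As a sanity check the formula returns $g=\tfrac12 q_{0}(q_{0}-1)$ when $s=1$ (so $\cX=\cH$) and $g=0$ when $s=q_{0}$, where $t=x^{q_{0}}+x=y^{q_{0}+1}$ forces $\FF_{q}(\cX)=\FF_{q}(z)$ to be rational; both agree with expectation.
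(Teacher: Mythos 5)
Your proof is correct. Note that the paper itself offers no argument for this statement: Proposition \ref{genus} is simply quoted from \cite{GSX}, so there is no ``paper proof'' to compare against line by line. Your route is the standard one (and essentially the one carried out in \cite{GSX}): identify $\FF_q(\cX)$ with the fixed field of the translation group $H_s\le\Aut(\cH)$, observe that the degree-$s$ Galois cover $\cH\to\cX$ is unramified at affine points (the $H_s$-orbits there have full length $s$) and totally and wildly ramified at $P_\infty$, and then feed the different into Riemann--Hurwitz. The key step --- Hilbert's different formula at the wild place --- is done correctly: $y/x$ is indeed a uniformizer at $P_\infty$ (order $-q_0+(q_0+1)=1$), and $v_{P_\infty}\bigl(\sigma_a(y/x)-y/x\bigr)=v_{P_\infty}\bigl(-ay/(x(x+a))\bigr)=q_0+2$ for every $a\neq0$, giving $\deg(\mathfrak d)=(q_0+2)(s-1)$ and hence $g=\tfrac12 q_0(\tfrac{q_0}{s}-1)$. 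Your appeal to the ramification filtration is legitimate because the extension of complete local rings at $P_\infty$ is totally ramified, so any uniformizer generates it and $i_G(\sigma)$ may be computed on $y/x$. The sanity checks at $s=1$ and $s=q_0$ are a nice touch; no gaps.
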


Let $\bar{P}_{\infty}$ be the only point at infinity of $\cX$. This point is the image of the only infinite point $P_{\infty}$ in $\cH$ by $\varphi$. Hence, the ramification index $\e_{P_{\infty}}$ of $P_{\infty}$ is equal to $deg(\varphi)=s$.
Moreover, it is easily seen that
$$ord_{\bar{P}_{\infty}}(t)=\frac{1}{s}\sum_{a\in H_{s}}ord_{P_{\infty}}(x+a)=-(q_{0}+1),$$
and
$$ord_{\bar{P}_{\infty}}(z)=\frac{1}{s}ord_{P_{\infty}}(y)=-\frac{q_{0}}{s}.$$
Hence,
\begin{equation}\label{n2}
\left\langle \frac{q_{0}}{s}, q_{0}+1\right\rangle\subseteq H(\bar{P}_{\infty}).
\end{equation}

\begin{proposition}\label{sW}
$$H(\bar{P}_{\infty})=\left\langle \frac{q_{0}}{s}, q_{0}+1\right\rangle.$$
\end{proposition}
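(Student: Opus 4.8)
The plan is to prove $H(\bar P_\infty)=\langle q_0/s,\,q_0+1\rangle$ by showing that the candidate semigroup $\Sigma:=\langle q_0/s,\,q_0+1\rangle$ has exactly $g=\tfrac12 q_0(\tfrac{q_0}{s}-1)$ gaps, where $g$ is the genus given in Proposition \ref{genus}. Since the reverse inclusion $\Sigma\subseteq H(\bar P_\infty)$ is already established in \eqref{n2}, and since the number of gaps of any Weierstrass semigroup $H(\bar P_\infty)$ equals the genus $g$ of the curve, a gap count of exactly $g$ for $\Sigma$ forces $\Sigma=H(\bar P_\infty)$: a proper subsemigroup would have strictly more gaps than its containing semigroup, a contradiction.

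The key computation is therefore purely numerical-semigroup theory for the two-generator semigroup $\Sigma=\langle a,b\rangle$ with $a=q_0/s$ and $b=q_0+1$. First I would verify that $\gcd(a,b)=1$, so that $\Sigma$ is indeed a numerical semigroup with finitely many gaps. Here one uses that $a=q_0/s$ divides $q_0$, hence is coprime to $q_0+1=b$. For a numerical semigroup generated by two coprime integers $a$ and $b$, the classical result (the Frobenius/Sylvester formula) states that the number of gaps is exactly $\frac{(a-1)(b-1)}{2}$. Substituting $a=q_0/s$ and $b=q_0+1$ gives
$$
\frac{(a-1)(b-1)}{2}=\frac{\bigl(\frac{q_0}{s}-1\bigr)\,q_0}{2}=\frac12 q_0\Bigl(\frac{q_0}{s}-1\Bigr),
$$
which is precisely the genus $g$ from Proposition \ref{genus}.

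If one wishes to avoid invoking the Sylvester gap formula as a black box, the alternative is a direct Apéry-set argument: every nonnegative integer $n$ has a unique representation $n=ub+v$ with $0\le v<b$ and $u\in\ZZ$, and $n\in\Sigma$ iff the least such $u$ is nonnegative, which can be tracked through the residues $v\equiv i\,a \pmod b$ for $i=0,1,\dots,a-1$; summing the number of excluded values in each residue class again yields $\frac{(a-1)(b-1)}{2}$. The main obstacle, such as it is, is not analytic but bookkeeping: one must be careful that $q_0/s$ is a genuine positive integer (guaranteed since $s\mid q_0$) and that the coprimality $\gcd(q_0/s,\,q_0+1)=1$ holds, since the entire argument collapses if the two generators share a common factor. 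Once coprimality and the gap count are in hand, the inclusion \eqref{n2} together with the matching gap count closes the proof immediately, with no further appeal to the geometry of $\cX$ or the map $\varphi$.
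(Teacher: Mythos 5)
Your proof is correct and follows essentially the same route as the paper: establish coprimality of $q_0/s$ and $q_0+1$, count the gaps of the two-generator semigroup via the Sylvester formula $\frac{(a-1)(b-1)}{2}$ (the paper cites this as Proposition 5.33 of \cite{HLP}), match this count with the genus from Proposition \ref{genus}, and conclude from the inclusion (\ref{n2}). The optional Ap\'ery-set digression is just a proof of that same formula, so there is nothing substantively different here.
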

\begin{proof} Note that $\frac{q_0}{s}$ and $q_0+1$ are coprime. Then by \cite[Proposition 5.33]{HLP} the genus of the semigroup generated by $\frac{q_0}{s}$ and $q_0+1$ is $\frac{1}{2}(\frac{q_{0}}{s}-1)(q_{0}+1-1)$. Then the assertion follows from (\ref{n2}), together with Proposition \ref{genus}.
\end{proof}

Now some special cases for curves (C) are considered in greater detail.
\subsection{$q_{0}=2^{h}$, $s=2$, $H_s=\{0,1\}$}

Let $q_{0}$ be a power of $2$, and $s=2$. Since that $q_{0}$ is even, then $H=\FF_{q_{0}}$. Let $H_{s}=\{0,1\}$. Therefore, the rational map $\varphi : \cH\rightarrow\cX$ defined as in (\ref{quo}) is 
$$\varphi(x,y)=(x^{2}+x, y).$$
\begin{proposition}
$\cX$ has affine equation
\begin{equation}\label{eq1}
Y^{q_{0}+1}=X^{\frac{q_{0}}{2}}+X^{\frac{q_{0}}{4}}+\ldots+X^{2}+X.
\end{equation}
\end{proposition}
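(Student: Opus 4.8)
The plan is to compute directly the algebraic relation satisfied by the image coordinates $t=x^2+x$ and $z=y$, and then to check by a degree count that this relation is in fact the minimal one, so that it genuinely defines $\cX$. The computation rests on the fact that, in characteristic $2$, both $x^2+x$ and $x^{q_0}+x$ are additive ($\FF_2$-linearized) polynomials, so that raising $t$ to $2$-power exponents telescopes.

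First I would write $q_0=2^h$ and examine the sum $\sum_{j=0}^{h-1}t^{2^j}$. Since the Frobenius is additive, $t^{2^j}=(x^2+x)^{2^j}=x^{2^{j+1}}+x^{2^j}$, and therefore
\[
\sum_{j=0}^{h-1}t^{2^j}=\sum_{j=0}^{h-1}\left(x^{2^{j+1}}+x^{2^j}\right)=x^{2^h}+x=x^{q_0}+x,
\]
because all intermediate terms $x^2,x^4,\ldots,x^{2^{h-1}}$ occur twice and cancel in characteristic $2$. Using the Hermitian equation (\ref{Hcurve}), $x^{q_0}+x=y^{q_0+1}=z^{q_0+1}$. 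As $j$ ranges from $0$ to $h-1$ the monomials $t^{2^j}$ run over $t,t^2,t^4,\ldots,t^{q_0/2}$ (note $2^{h-1}=q_0/2$), so the identity says precisely that $(t,z)$ satisfies
\[
Z^{q_0+1}=T^{q_0/2}+T^{q_0/4}+\cdots+T^2+T,
\]
which is the asserted equation.

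It remains to verify that this relation is the \emph{defining} one, i.e. that the plane model it cuts out is $\cX$ itself and not a proper cover. For this I would count degrees in the tower $\FF_q(t)\subseteq\FF_q(t,z)\subseteq\FF_q(x,y)=\FF_q(\cH)$. The Hermitian curve gives $[\FF_q(x,y):\FF_q(x)]=q_0+1$, and $t=x^2+x$ gives $[\FF_q(x):\FF_q(t)]=2$, whence $[\FF_q(x,y):\FF_q(t)]=2(q_0+1)$. On the other hand $\deg\varphi=s=2$ means $[\FF_q(x,y):\FF_q(t,z)]=2$, so $[\FF_q(t,z):\FF_q(t)]=q_0+1$. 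Since $z$ satisfies over $\FF_q(t)$ the monic polynomial of degree $q_0+1$ displayed above, that polynomial must be its minimal polynomial and is therefore irreducible over $\FF_q(t)$; being monic, hence primitive, in $Z$, it is irreducible in $\FF_q[T,Z]$ by Gauss's lemma, and so it is the affine equation of $\cX$, whose function field is $\FF_q(t,z)$.

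The only genuinely delicate point is the telescoping identity; once it is in hand, everything else is bookkeeping. As a consistency check one can confirm that the curve so obtained has the genus $\tfrac12 q_0(\tfrac{q_0}{2}-1)$ predicted by Proposition \ref{genus}. I would expect the main care to be needed in making the cancellation in the displayed sum fully precise, and in justifying that $\deg\varphi=s=2$ forces $[\FF_q(x,y):\FF_q(t,z)]=2$ (equivalently, that $x\notin\FF_q(t,z)$, which holds because $x$ and $x+1$ are the two distinct roots of $T^2+T-t$).
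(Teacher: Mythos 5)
Your proof is correct, and its core step --- the telescoping identity $(X^{2}+X)^{q_{0}/2}+(X^{2}+X)^{q_{0}/4}+\cdots+(X^{2}+X)=X^{q_{0}}+X$ in characteristic $2$, combined with the Hermitian equation --- is exactly the paper's argument. Your additional degree count in the tower $\FF_q(t)\subseteq\FF_q(t,z)\subseteq\FF_q(x,y)$, showing that $Z^{q_0+1}-(T^{q_0/2}+\cdots+T)$ is the minimal polynomial of $z$ over $\FF_q(t)$ and hence genuinely the defining equation rather than merely a relation satisfied by the image, is a welcome extra verification that the paper omits, since its proof stops at checking that $\varphi(X,Y)$ satisfies the displayed equation.
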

\begin{proof}
Let $(X,Y)\in\cH$. We need to prove that $\varphi(X,Y)$ satisfies (\ref{eq1}) for every point $(X,Y)\in \cH$. To do this it is enough to observe that
$$(X^{2}+X)^{\frac{q_{0}}{2}}+(X^{2}+X)^{\frac{q_{0}}{4}}+\ldots+(X^{2}+X)^{2}+(X^{2}+X)=X^{q_0}+X,$$
and take into account that $(X,Y)$ satisfies (\ref{Hcurve}).
\end{proof}
The curve with equation $\ref{eq1}$ was investigated in \cite{GV}.
\begin{proposition}[\cite{GV}]\label{sW2}
There exists a point $P\in\cX$ such that the Weierstrass semigroup at $P$ is
$$H(P)=\left\langle q_{0}-1, q_{0}, q_{0}+1\right\rangle.$$
\end{proposition}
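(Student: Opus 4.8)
The plan is to realize $P$ as the affine point of $\cX$ with coordinates $(0,0)$, to write down three explicit functions whose only pole is at $P$ with pole orders $q_0+1$, $q_0$ and $q_0-1$, and then to close the argument by a gap count, exactly as in the proofs of Theorems~\ref{sgr1} and \ref{sgr2}: the semigroup generated by $q_0-1,q_0,q_0+1$ turns out to have precisely $g$ gaps, so it must coincide with $H(P)$.

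First I would carry out the local analysis at $P=(0,0)$. Writing $t,z$ for the coordinate functions on $\cX$, equation (\ref{eq1}) reads $z^{q_0+1}=L(t)$ with $L(t)=t^{q_0/2}+\cdots+t^2+t$. The polynomial $L$ is additive and separable, and its lowest-degree term is $t$; hence $L(t)=t\cdot u$ with $u$ a unit of the local ring at $P$, and $(q_0+1)\,\ord_P(z)=\ord_P(t)$. Since $P$ is a smooth point this forces $\ord_P(z)=1$ and $\ord_P(t)=q_0+1$, with $z$ a local uniformizer at $P$.

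Next I would compute the divisors of $t$ and $z$. From $t=0\Rightarrow z=0$ one sees that $t$ vanishes only at $P$, necessarily to order $q_0+1$, and its unique pole is $\bar{P}_{\infty}$; likewise $z$ vanishes simply at each of the $q_0/2$ points with $z=0$ and has a pole of order $q_0/2$ at $\bar{P}_{\infty}$ (as in the computation preceding Proposition~\ref{sW}). A short check at $P$, at the remaining zeros of $z$, and at $\bar{P}_{\infty}$ then shows that each of the functions
$$\frac1t,\qquad \frac zt,\qquad \frac{z^2}{t}$$
has its only pole at $P$, of order $q_0+1$, $q_0$ and $q_0-1$ respectively. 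Consequently $q_0-1,q_0,q_0+1\in H(P)$ and $\Theta:=\langle q_0-1,q_0,q_0+1\rangle\subseteq H(P)$. The memberships $q_0,q_0+1\in H(P)$ also follow at once from Proposition~\ref{FGT4}(iv); the essential new input is the function $z^2/t$ realizing $q_0-1$.

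It remains to count the gaps of $\Theta$. Because the three generators are consecutive, the sums of exactly $j$ of them fill the whole integer interval $[\,j(q_0-1),\,j(q_0+1)\,]$, so the gaps of $\Theta$ are exactly the integers lying strictly between two consecutive such intervals; there are $q_0-2-2j$ of them between the $j$-th and the $(j+1)$-th interval, and summing over $j=0,1,\dots,\tfrac{q_0}{2}-2$ (here $q_0$ even is used) gives $\tfrac{q_0}{2}\bigl(\tfrac{q_0}{2}-1\bigr)=g$ gaps, with $g$ as in Proposition~\ref{genus} for $s=2$. Since $H(P)$ has exactly $g$ gaps and $\Theta\subseteq H(P)$, the two semigroups have the same finite complement in $\N$, whence $H(P)=\Theta$. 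I expect the only genuinely delicate point to be the verification that $z^2/t$, and similarly $z/t$, has no pole away from $P$; this rests entirely on knowing the full divisor of $z$, i.e.\ on locating all zeros of $z$ and controlling its order at infinity. The combinatorial gap count is then routine once the interval description of $\Theta$ is in place.
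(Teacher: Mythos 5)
Your proof is correct, but note that the paper itself offers no argument for Proposition~\ref{sW2}: it is quoted from Garcia--Viana \cite{GV} as a known result. So you are not paralleling a proof in the text but replacing a citation with a self-contained verification, carried out with exactly the toolkit the paper uses for its other semigroup determinations (explicit functions with a single pole, then a gap count against the genus, as in Proposition~\ref{sW} and Theorems~\ref{sgr1}--\ref{sgr2}). Your computations check: since $L(T)=T^{q_0/2}+\cdots+T^2+T$ is additive and separable with $L'=1$, the point $P=(0,0)$ is smooth, $z$ is a uniformizer there, $\ord_P(t)=q_0+1$, the divisors of $t$ and $z$ are as you describe (with $z$ having $q_0/2$ simple affine zeros and a pole of order $q_0/2$ at $\bar P_\infty$), and consequently $1/t$, $z/t$, $z^2/t$ have pole divisors $(q_0+1)P$, $q_0P$, $(q_0-1)P$; as you say, $z^2/t$ is the only genuinely new input beyond Proposition~\ref{FGT4}(iv). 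The interval description of $\bigl\langle q_0-1,q_0,q_0+1\bigr\rangle$ and the resulting gap count $\tfrac{q_0}{2}\bigl(\tfrac{q_0}{2}-1\bigr)$ match the genus from Proposition~\ref{genus} with $s=2$, which closes the argument. What your route buys is independence from \cite{GV} and an explicit identification of the point $P$ and of generators of $L(\gamma P)$ (useful if one actually wants to write down the codes $(C1b)$); what the paper's citation buys is brevity and the greater generality of \cite{GV}, which treats the Weierstrass points of these non-classical curves systematically rather than at this one point.
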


Therefore, taking into account Proposition \ref{sW}, the curve $\cX$ has at least two different Weierstrass semigroups.



\subsubsection{$q_0=8$.} In this case the curve $\cX$ has equation
$$Y^{9}=X^{4}+X^{2}+X,$$
its genus is $g=12$, and the number of its $\FF_{64}$-rational points is $257$. In Appendix, we will denote by $(C1a)$ the Improved AG codes costructed from the Weierstrass semigroup of Proposition \ref{sW}, that is
$H(\bar{P}_{\infty})=\left\langle 4,9\right\rangle$, and by $(C1b)$ those costructed from the  Weierstrass semigroup of Proposition \ref{sW2}, $H(P)=\left\langle 7, 8, 9\right\rangle$.


\subsection{$q_0=2^{h}$, $s=\frac{q_0}{2}$, $H_{s}=\left\{a\in H |\,Tr(a)=0\right\}$.}

Let $q_{0}$ be a power of $2$, and $s=\frac{q_{0}}{2}$. Since that $q_{0}$ is even, then $H=\FF_{q_{0}}$. Let $H_{s}=\left\{a\in H |\,Tr(a)=0\right\}$,
where $Tr(a)=a+a^{2}+\ldots+a^{\frac{q_{0}}{4}}+a^{\frac{q_{0}}{2}}$.
Therefore, the rational map $\varphi : \cH\rightarrow\cX$ defined as in (\ref{quo}) is 
$$\varphi(x,y)=(\prod_{a\in\FF_{q_{0}}:\,Tr(a)=0}(x+a), y)=(Tr(x), y).$$
\begin{proposition}
The curve $\cX$ has affine equation
\begin{equation}\label{eq2}
Y^{q_{0}+1}=X^{2}+X.
\end{equation}
\end{proposition}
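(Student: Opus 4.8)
The plan is to follow the proof of the companion proposition for $s=2$: instead of computing the image of $\varphi$ abstractly, I would verify that every point $\varphi(X,Y)$ satisfies (\ref{eq2}), and then argue that (\ref{eq2}) is indeed the affine equation of $\cX$.

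First I would pin down the map. Because $q_0$ is even, $c^{q_0}+c=0$ is equivalent to $c\in\FF_{q_0}$, so $H=\FF_{q_0}$; and $Tr(a)=a+a^2+\cdots+a^{q_0/2}=\sum_{i=0}^{h-1}a^{2^i}$ is the absolute trace $\FF_{q_0}\to\FF_2$, whose kernel $H_s$ is an $\FF_2$-hyperplane of $\FF_{q_0}$ and hence has exactly $q_0/2=s$ elements, as required. To justify the equality $\varphi(x,y)=(Tr(x),y)$ asserted in the statement, I would note that $Tr(X)$ is a monic, separable, additive polynomial of degree $q_0/2$ whose $q_0/2$ roots are precisely the elements of $H_s$; therefore $Tr(X)=\prod_{a\in H_s}(X-a)=\prod_{a\in H_s}(X+a)$ in characteristic $2$, which is exactly the first coordinate of $\varphi$.

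The core of the argument is the polynomial identity
\[
Tr(X)^2+Tr(X)=X^{q_0}+X.
\]
I would obtain this by squaring in characteristic $2$: $Tr(X)^2=\big(\sum_{i=0}^{h-1}X^{2^i}\big)^2=\sum_{i=0}^{h-1}X^{2^{i+1}}=X^2+X^4+\cdots+X^{q_0}$, so adding $Tr(X)=X+X^2+\cdots+X^{q_0/2}$ cancels all the intermediate powers $X^2,X^4,\dots,X^{q_0/2}$ and leaves $X^{q_0}+X$. This telescoping cancellation is the only computational content of the proof.

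Finally I would conclude exactly as in the $s=2$ case. For any $(X,Y)\in\cH$, relation (\ref{Hcurve}) gives $Y^{q_0+1}=X^{q_0}+X$, which by the displayed identity equals $Tr(X)^2+Tr(X)$; hence the image point $(t,z)=\varphi(X,Y)=(Tr(X),Y)$ satisfies $z^{q_0+1}=t^2+t$, i.e. it lies on (\ref{eq2}). Since $\varphi$ is a nonconstant morphism out of the irreducible curve $\cH$, its image $\cX$ is an irreducible curve contained in the zero locus of (\ref{eq2}); as that locus is itself an irreducible plane curve, the two must coincide. The one step deserving a word of care is precisely this last irreducibility/dimension comparison, which is what guarantees that $\cX$ fills out all of (\ref{eq2}) rather than a proper subcurve; everything preceding it is routine.
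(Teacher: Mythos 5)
Your proof is correct and follows essentially the same route as the paper, whose entire argument is the identity $(Tr(X))^{2}+Tr(X)=X^{q_{0}}+X$ combined with the Hermitian relation; you simply supply the details the paper leaves implicit (the telescoping computation of $Tr(X)^2+Tr(X)$, the identification $\prod_{a\in H_s}(X+a)=Tr(X)$, and the surjectivity of $\varphi$ onto the plane curve). No further changes are needed.
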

\begin{proof}
Let $(X,Y)\in\cH$. The assertion follows from the equation
$$(Tr(X))^{2}+(Tr(X))=X^{q_0}+X.$$
\end{proof}
\begin{proposition}[\cite{GV}]\label{sW3}
There exists a point $P\in\cX$ such that the Weierstrass semigroup at $P$ is
$$H(P)=\left\langle q_{0}+1-\frac{q_{0}}{2}, q_{0}+1-(\frac{q_{0}}{2}-1),\ldots, q_{0}, q_{0}+1\right\rangle.$$
\end{proposition}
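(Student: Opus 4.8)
The plan is to follow the same strategy employed for Theorems \ref{sgr1} and \ref{sgr2}: exhibit an explicit $\fq$-rational point $P$, prove that the numerical semigroup $\Theta$ generated by the proposed integers is contained in $H(P)$, and then check that the number of gaps of $\Theta$ equals the genus $g$ of $\cX$. Since $H(P)$ has exactly $g$ gaps, the inclusion $\Theta\subseteq H(P)$ together with $\#(\NN\setminus\Theta)=g$ will force $\Theta=H(P)$. A convenient feature of this case is that the proposed generators $q_{0}+1-\frac{q_{0}}{2},\ldots,q_{0},q_{0}+1$ are exactly the consecutive integers from $a:=\frac{q_{0}}{2}+1$ up to $2a-1=q_{0}+1$, so the gap count becomes essentially trivial.

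For the inclusion I would take $P$ to be the $\fq$-rational point $(t,z)=(0,0)$ of $\cX$, whose defining equation I rewrite as $z^{q_{0}+1}=t(t+1)$. Since $t+1$ is a unit at $P$, one reads off $\ord_{P}(z)=1$ and $\ord_{P}(t)=q_{0}+1$, so $z$ is a local parameter at $P$. Recalling from the computation preceding Proposition \ref{sW} that the only point at infinity $\bar P_{\infty}$ satisfies $\ord_{\bar P_{\infty}}(t)=-(q_{0}+1)$ and $\ord_{\bar P_{\infty}}(z)=-2$, and that $z$ vanishes simply over $t\in\{0,1\}$, I obtain the divisors
$$(t)=(q_{0}+1)P-(q_{0}+1)\bar P_{\infty},\qquad (z)=P+P'-2\bar P_{\infty},$$
where $P'=(1,0)$. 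Then for $0\le k\le\frac{q_{0}}{2}$ the functions $z^{k}/t$ have divisor
$$\left(\frac{z^{k}}{t}\right)=(k-q_{0}-1)P+kP'+(q_{0}+1-2k)\bar P_{\infty},$$
in which every coefficient other than that of $P$ is non-negative; hence $z^{k}/t$ has a pole only at $P$, of order $q_{0}+1-k$. As $k$ ranges over $0,\ldots,\frac{q_{0}}{2}$ these orders sweep out $\frac{q_{0}}{2}+1,\ldots,q_{0}+1$, i.e.\ precisely the generators of $\Theta$, giving $\Theta\subseteq H(P)$.

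It remains to count gaps and conclude. With $a=\frac{q_{0}}{2}+1$, the semigroup $\Theta$ is generated by the interval $\{a,a+1,\ldots,2a-1\}$; the integers $1,\ldots,a-1$ are gaps, whereas $\{a,\ldots,2a-1\}$ and its pairwise sums $\{2a,\ldots,4a-2\}$ already cover every integer $\ge a$, so $\Theta$ has exactly $a-1=\frac{q_{0}}{2}$ gaps. By Proposition \ref{genus} with $s=\frac{q_{0}}{2}$ the genus of $\cX$ is $g=\frac{1}{2}q_{0}\left(\frac{q_{0}}{s}-1\right)=\frac{q_{0}}{2}$. Since $H(P)$ has exactly $g=\frac{q_{0}}{2}$ gaps and $\Theta\subseteq H(P)$ with $\#(\NN\setminus\Theta)=\frac{q_{0}}{2}$, the two semigroups coincide, which proves the proposition.

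The one genuinely delicate step is the inclusion, and within it the verification that the functions $z^{k}/t$ have poles concentrated at $P$; this rests on knowing the exact location and order of the zeros and poles of $t$ and $z$, which in turn uses that $q_{0}+1$ is odd, so that $z=0$ meets $\cX$ transversally at the two places over $t\in\{0,1\}$ while $t=\infty$ is a single totally ramified place. A more intrinsic alternative would sidestep the guessing of functions by invoking Proposition \ref{FGT4}: as $\cX$ is maximal, it would be enough to compute the $\cD$-orders of the Frobenius linear series at $P$ and pass to non-gaps via part (iii). I expect the explicit-function argument to be the shorter of the two, with the divisor bookkeeping being the only place demanding real care.
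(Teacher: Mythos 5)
Your proof is correct, but there is nothing in the paper to compare it against: Proposition \ref{sW3} is quoted from \cite{GV} and the paper supplies no argument for it. What you have written is a legitimate self-contained verification, and it is entirely in the style of the arguments the paper \emph{does} give elsewhere: the divisor bookkeeping for $t$ and $z$ mirrors the computation preceding Proposition \ref{sW} and the proof of Proposition \ref{pp}, while the concluding step (a subsemigroup of $H(P)$ whose gap count equals $g$ must equal $H(P)$) is exactly the mechanism of Propositions \ref{sW} and \ref{sW5}. The individual steps check out: in characteristic $2$ the partial derivative of $z^{q_{0}+1}-t^{2}-t$ with respect to $t$ equals $1$, so $P=(0,0)$ is a simple point with local parameter $z$ and $\ord_{P}(t)=q_{0}+1$; the degrees $[\fq(\cX):\fq(t)]=q_{0}+1$ and $[\fq(\cX):\fq(z)]=2$ confirm your divisors of $t$ and $z$; the inequality $q_{0}+1-2k\geq 0$ for $0\leq k\leq \frac{q_{0}}{2}$ makes $z^{k}/t$ regular away from $P$ with pole order $q_{0}+1-k$, which sweeps out precisely the stated generators; and the semigroup generated by the interval $\{a,\ldots,2a-1\}$ with $a=\frac{q_{0}}{2}+1$ has exactly $a-1=\frac{q_{0}}{2}$ gaps, matching the genus from Proposition \ref{genus}. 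What the paper's citation buys is brevity; what your argument buys is an explicit point and explicit functions realizing each non-gap, which is in fact what one needs in practice to write down the vectors $h_{\l}$ defining the improved codes.
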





\subsubsection{$q_{0}=16$.}   
The curve $\cX$ has equation
$$Y^{17}=X^{2}+X,$$
its genus is $g=8$, and the number of its $\FF_{256}$-rational points is $513$. In Appendix, the Improved AG codes costructed from the Weierstrass semigroup of Proposition \ref{sW}, that is
$H(\bar{P}_{\infty})=\left\langle 2, 17\right\rangle$, will be referred to as codes (C2).

\subsection{$q_{0}=16$, $s=4$, $H_s=\FF_{4}$.} 

Let $q_{0}=16$, and $s=4$. Since that $q_{0}$ is even, then $H=\FF_{16}$. So, we can consider the case $H_{s}=\FF_{4}$.
The rational map $\varphi : \cH\rightarrow\cX$ defined as in (\ref{quo}) is 
$$\varphi(x,y)=(x^{4}+x, y).$$
\begin{proposition}
The curve $\cX$ has affine equation
\begin{equation}\label{eq3}
Y^{17}=X^{4}+X.
\end{equation}
Moreover, $\cX$ has genus $g=24$, and the number of its $\FF_{256}$-rational points is $1025$.
\end{proposition}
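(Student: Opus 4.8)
The plan is to follow the same three-step pattern already used for equations (\ref{eq1}) and (\ref{eq2}): first make the map $\varphi$ explicit, then verify the claimed equation by a direct computation, and finally read off the genus and the point count from the general theory established earlier in this section. First I would evaluate the map $\varphi$ of (\ref{quo}) for the present data. Since $H_s=\FF_4$ and we are in characteristic $2$, the product $\prod_{a\in\FF_4}(X+a)$ equals $X^4-X=X^4+X$, so that $\varphi(x,y)=(x^4+x,\,y)$, as asserted. Writing $(t,z)=\varphi(x,y)$, equation (\ref{eq3}) then follows from a freshman's-dream computation in characteristic $2$: using $(x^4+x)^4=x^{16}+x^4$ one gets
$$t^4+t=(x^4+x)^4+(x^4+x)=x^{16}+x=x^{q_0}+x,$$
and since $(x,y)$ lies on the Hermitian curve (\ref{Hcurve}) the right-hand side equals $y^{17}=z^{17}$. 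Hence $z^{17}=t^4+t$, which is exactly (\ref{eq3}).

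The genus is then handed to us directly by Proposition \ref{genus}, which applies since $s=4$ is a divisor of $q_0=16$ and $\FF_4$ is an additive subgroup of $H=\FF_{16}$ of order $s$. Substituting $q_0=16$ and $s=4$ yields
$$g=\frac{1}{2}q_0\left(\frac{q_0}{s}-1\right)=\frac{1}{2}\cdot 16\cdot(4-1)=24.$$
For the point count, the key observation is that $\cX$ is the quotient of $\cH$ by the group of $\FF_q$-rational translations $x\mapsto x+a$, $a\in H_s$, which are automorphisms of $\cH$ defined over $\FF_q$. Since $\cH$ is $\FF_q$-maximal and maximality is inherited by the quotient of a maximal curve by an $\FF_q$-rational automorphism group, $\cX$ is $\FF_q$-maximal as well. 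Consequently its number of rational points attains the Hasse-Weil bound:
$$\#\cX(\FF_{256})=q+1+2gq_0=256+1+2\cdot 24\cdot 16=1025.$$

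All three computations are routine; in particular the equation follows purely from the characteristic-$2$ identity above, and the numerical values for $g$ and $\#\cX(\FF_{256})$ are mere substitutions once the genus formula and maximality are in hand. The only point deserving a word of justification is the inheritance of maximality by the quotient $\cX$, which is the structural fact underpinning the whole family of curves (C) rather than a genuine obstacle specific to this case.
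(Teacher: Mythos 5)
Your proposal is correct and follows essentially the same route as the paper: the equation is verified by the characteristic-$2$ identity $(X^4+X)^4+(X^4+X)=X^{16}+X$ combined with the Hermitian equation, and the genus is read off from Proposition \ref{genus}. You are in fact slightly more careful than the paper on the point count, since Proposition \ref{genus} only supplies the genus and the value $1025$ additionally requires the $\FF_q$-maximality of the quotient curve, which you justify explicitly.
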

\begin{proof}
Let $(X,Y)\in\cH$. We need to prove that $\varphi(X,Y)$ satisfies (\ref{eq3}). To do this it is enough to observe that
$$(X^{4}+X)^{4}+(X^{4}+X)=X^{16}+X,$$
and take into account that $(X,Y)$ satisfies (\ref{Hcurve}) for $q_0=16$.
The second part of the assertion follows from Proposition \ref{genus}.
\end{proof}

In Appendix, the Improved AG codes costructed from the Weierstrass semigroup of Proposition \ref{sW}, that is
$H(\bar{P}_{\infty})=\left\langle 4, 17\right\rangle$, will be referred to as codes (C3).

\subsection{$q_{0}=9$, $s=3$, $H_{s}=\left\{x\in H |\,x^{3}=\alpha x\right\}$, $\alpha$ primitive element of $\FF_{9}$.}

Let $q_{0}=9$, $s=3$. Also, let $\alpha$ be a primitive element of $\FF_{9}$. Note that $\alpha4=-1$.  Let $H_{s}=\left\{x\in H |\,x^{3}=\alpha x\right\}$. It is a straightforward computation to check that  $H_{s}\subset H$.  In fact, for $x\in H_s$,
$$x^{9}+x=(\alpha x)^{3}+x=\alpha^{3}x^{3}+x=\alpha^{4}x+x=-x+x=0.$$
The rational map $\varphi : \cH\rightarrow\cX$ defined as in (\ref{quo}) is 
$$\varphi(x,y)=(x^{3}-\alpha x, y).$$
\begin{proposition}
The curve $\cX$ has affine equation
\begin{equation}\label{eq4}
Y^{10}=X^{3}+\alpha^{3}X.
\end{equation}
Moreover, $\cX$ has genus $g=9$, and the number of its $\FF_{81}$-rational points is $244$.
\end{proposition}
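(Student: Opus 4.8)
The plan is to prove the three assertions in turn, following the template set by the earlier propositions of this section. For the affine equation I would argue by direct substitution, exactly as in the proofs of (\ref{eq1}), (\ref{eq2}) and (\ref{eq3}). Fix any $(X,Y)\in\cH$ and put $t=X^{3}-\alpha X$, $z=Y$, so that $\varphi(X,Y)=(t,z)$. The key computation is to show that $t^{3}+\alpha^{3}t=X^{9}+X$. Since $q_{0}=9=3^{2}$, the ground field has characteristic $3$, so the Frobenius identity $(a-b)^{3}=a^{3}-b^{3}$ gives $t^{3}=X^{9}-\alpha^{3}X^{3}$; hence
$$t^{3}+\alpha^{3}t=X^{9}-\alpha^{3}X^{3}+\alpha^{3}X^{3}-\alpha^{4}X=X^{9}-\alpha^{4}X.$$
Using the hypothesis $\alpha^{4}=-1$ this equals $X^{9}+X$, which in turn equals $Y^{10}=z^{10}$ because $(X,Y)$ satisfies (\ref{Hcurve}) with $q_{0}=9$. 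Thus every image point $(t,z)$ satisfies (\ref{eq4}), and since $\varphi$ is dominant of degree $s=3$ this is the affine model of $\cX$.

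For the genus I would simply specialize Proposition \ref{genus}. With $q_{0}=9$ and $s=3$ we get $\tfrac{q_{0}}{s}=3$, whence
$$g=\tfrac{1}{2}q_{0}\Bigl(\tfrac{q_{0}}{s}-1\Bigr)=\tfrac{1}{2}\cdot 9\cdot(3-1)=9.$$
For the number of rational points I would invoke maximality. The curve $\cX$ is one of the quotient curves (C) of the Hermitian curve $\cH$ studied in \cite{GSX}, and such quotients are $\FF_{q}$-maximal; hence $\#\cX(\FF_{q})=q+1+2gq_{0}$. Substituting $q=81$, $g=9$, $q_{0}=9$ gives $81+1+2\cdot 9\cdot 9=244$.

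The only step that requires genuine care is the characteristic-$3$ computation in the first part: one must apply $(a-b)^{3}=a^{3}-b^{3}$ correctly and keep track of the exact power of $\alpha$ so that the relation $\alpha^{4}=-1$ produces the sign needed to turn $-\alpha^{4}X$ into $+X$. The remaining two assertions are immediate specializations of results already established in the excerpt, so I expect no difficulty there; the main (minor) obstacle is ensuring that verifying (\ref{eq4}) pointwise on $\cH$, together with the degree of $\varphi$, really pins down the full affine equation of $\cX$ rather than merely an inclusion of one curve into another — a point handled in exactly the same way as in the preceding propositions.
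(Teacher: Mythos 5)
Your proposal is correct and follows essentially the same route as the paper: the affine equation is verified by the same characteristic-$3$ computation $(X^{3}-\alpha X)^{3}+\alpha^{3}(X^{3}-\alpha X)=X^{9}-\alpha^{4}X=X^{9}+X$ combined with the Hermitian relation, and the genus and point count are obtained from Proposition~\ref{genus} and maximality exactly as you describe. The only difference is that you make explicit the intermediate cancellation and the maximality argument for the count $81+1+2\cdot 9\cdot 9=244$, which the paper leaves implicit.
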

\begin{proof}
Let $(X,Y)\in\cH$. We need to prove that $\varphi(X,Y)$ satisfies (\ref{eq4}). To do this it is enough to observe that
$$(X^{3}-\alpha X)^{3}+\alpha^{3}(X^{3}-\alpha X)=X9-\alpha^4X=X9+X,$$
and take into account  that $(X,Y)$ satisfies (\ref{Hcurve}) for $q_0=9$.
The second part of the assertion follows from Proposition \ref{genus}.
\end{proof}

In Appendix, the improved AG codes costructed from the Weierstrass semigroup  $H(\bar{P}_{\infty})=\left\langle 3, 10\right\rangle$ will be referred to as codes $(C4)$.

\section{Weierstrass semigroups for  curves (D)}

Let $m$ be a divisor of $q_{0}+1$, and
let $\cX_{m}$ be the non-singular model of the curve over $\FF_{q}$ with affine equation
$$Y^{m}=X^{q_{0}}+X.$$

\begin{proposition}\label{n3} The curve
$\cX_{m}$ has the following properties.
\begin{enumerate}
\item[\rm{(i)}] $\cX_{m}$ is maximal.
\item[\rm{(ii)}] The genus of $\cX_{m}$ is $g=\frac{1}{2}(q_{0}-1)(m-1)$.
\item[\rm{(iii)}] There exists precisely one $\FF_{q}$-rational point of $\cX_m$ centred at the only point at infinity of the plane curve  
$Y^{m}=X^{q_{0}}+X$, and
$$ord_{\bar{P}_{\infty}}(x)=-m,\,\,\,\,\,\,\,\,\,ord_{\bar{P}_{\infty}}(y)=-q_{0}$$
hold.
\end{enumerate}
\end{proposition}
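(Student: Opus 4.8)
The plan is to regard $\cX_m$ as the Kummer (superelliptic) cover $x\colon\cX_m\to\PP^1$ of degree $m$ attached to the function field $\FF_q(\cX_m)=\FF_q(x,y)$ with $y^m=f(x)$, $f(x)=x^{q_0}+x$, and to read off all three assertions from the ramification of this cover. Two arithmetic facts drive everything. First, $f$ is separable: since $q_0$ is a power of the characteristic $p$, one has $f'(x)=q_0x^{q_0-1}+1=1$, so $f$ has $q_0$ distinct roots, namely the $q_0$ elements of $H=\{c:c^{q_0}+c=0\}\subseteq\FF_q$. Second, from $m\mid q_0+1$ and $q_0\equiv 0\pmod p$ we get $q_0+1\equiv 1\pmod p$, whence $p\nmid m$ (so every ramification below is tame), and $q_0\equiv -1\pmod m$, whence $\gcd(m,q_0)=1$.

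For (iii) I would localize at the place $x=\infty$ of $\FF_q(x)$. There $\ord_\infty(f)=-q_0$, and since $\gcd(m,q_0)=1$, Kummer theory gives ramification index $e=m/\gcd(m,q_0)=m$; thus $x=\infty$ is totally ramified and carries a single place $\bar P_\infty$, which is exactly the place centred at the unique point $[0:1:0]$ at infinity of the plane model. Total ramification forces residue degree $1$, so $\bar P_\infty$ is $\FF_q$-rational. Finally $\ord_{\bar P_\infty}(x)=-e=-m$, and taking $\ord_{\bar P_\infty}$ in $y^m=f$ gives $m\,\ord_{\bar P_\infty}(y)=\ord_{\bar P_\infty}(f)=-mq_0$, so $\ord_{\bar P_\infty}(y)=-q_0$. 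In particular $m,q_0\in H(\bar P_\infty)$, so $\langle m,q_0\rangle\subseteq H(\bar P_\infty)$; since $\gcd(m,q_0)=1$ this free semigroup already has genus $\tfrac12(q_0-1)(m-1)$, so—exactly as in Proposition \ref{sW}—equality will follow, although this is not needed for the present statement.

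For (ii) I would feed the same ramification data into Riemann--Hurwitz for the degree-$m$ map $x$. The branch points are precisely the places where $\ord(f)\not\equiv 0\pmod m$: the $q_0$ simple zeros of $f$ and the pole at infinity. By the computation above each is totally and tamely ramified with $e=m$, contributing $m-1$; summing over the $q_0+1$ branch points yields
\[
2g-2=-2m+(q_0+1)(m-1)=mq_0-m-q_0-1,
\]
that is $g=\tfrac12(q_0-1)(m-1)$.

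For (i), the quickest route is to present $\cX_m$ as a quotient of the Hermitian curve $\cH\colon v^{q_0+1}=u^{q_0}+u$. With $\ell=(q_0+1)/m$, the $\FF_q$-rational map $(u,v)\mapsto(u,v^{\ell})=(x,y)$ maps $\cH$ onto $\cX_m$, since $y^m=v^{\ell m}=v^{q_0+1}=u^{q_0}+u=x^{q_0}+x$. As $\cH$ is $\FF_q$-maximal and an $\FF_q$-rational image of an $\FF_q$-maximal curve is again $\FF_q$-maximal (Serre's covering argument), $\cX_m$ is maximal; its point count is then forced to equal $q+1+2gq_0$ with the $g$ of (ii). The one point demanding care is the verification that the $q_0$ affine branch points and the place at infinity are all totally and tamely ramified—everything else is bookkeeping once $\gcd(m,q_0)=1$ and $p\nmid m$ are in hand, so I expect that combinatorial input, rather than any hard geometry, to be the crux.
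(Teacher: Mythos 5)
Your argument is correct, and it reaches the three assertions by a genuinely different route than the paper. For (i) and (ii) the paper simply cites \cite{CKT1}, whereas you prove them: (i) via the covering $\cH\rightarrow\cX_m$, $(u,v)\mapsto(u,v^{(q_0+1)/m})$ together with the Serre--Lachaud fact that an $\FF_q$-rational image of an $\FF_q$-maximal curve is maximal, and (ii) via Riemann--Hurwitz for the degree-$m$ Kummer cover $x:\cX_m\rightarrow\PP^1$, using that $f=X^{q_0}+X$ is separable and that $p\nmid m$, $\gcd(m,q_0)=1$ force the $q_0+1$ branch points to be totally and tamely ramified. For (iii) the paper works \emph{downward} from the Hermitian curve: it uses the very covering you introduce for (i), notes that $P_\infty$ is the unique preimage of $\bar P_\infty$ so that $e_{P_\infty}=\deg\varphi=(q_0+1)/m$, and divides the known pole orders $\ord_{P_\infty}(x)=-(q_0+1)$ and $\ord_{P_\infty}(y^{(q_0+1)/m})=-q_0(q_0+1)/m$ by this ramification index; you instead work \emph{upward} from $\PP^1$, reading off $e=m/\gcd(m,q_0)=m$ at $x=\infty$ from Kummer theory and then solving $m\,\ord_{\bar P_\infty}(y)=-mq_0$. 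Both computations are valid and give the same valuations; your version is more self-contained (no external citation for the genus and maximality) at the cost of being longer, while the paper's is shorter and has the small aesthetic advantage that a single covering, $\cH\rightarrow\cX_m$, carries all of the information it needs. The one point worth making explicit in your write-up is the justification of the Kummer ramification formula $e_P=m/\gcd(m,v_P(f))$ (e.g.\ via Stichtenoth's treatment of Kummer extensions), since everything in (ii) and (iii) hinges on it.
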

\begin{proof}
Assertion (i) and (ii) follows from \cite[(IV) of Proposition 2.1]{CKT1}.
It is straightforward to check that
$\cX_{m}$ is the  image of the Hermitian curve by the rational map
$$\varphi : \cH\rightarrow\cX_{m},\,\,\,\,\,\,\,\,(x,y)\mapsto(x,y^{h}),$$
where $h=\frac{q_{0}+1}{m}$.
The only point at infinity $\bar{P}_{\infty}$ of $\cX_{m}$ is the image of $P_{\infty}\in\cH$ by $\varphi$, and it is easily seen that $e_{P_{\infty}}$ is equal to $deg(\varphi)=h$.
Let $f\in\bar{\FF}_{q}(x,y^{h})$, and let $\varphi^{*}$ be the pull-back of $\varphi$. Then
$ord_{P_{\infty}}(\varphi^{*}(f))=e_{P_{\infty}}ord_{\bar{P}_{\infty}}(f).$
Therefore,
$$\frac{q_{0}+1}{m}ord_{\bar{P}_{\infty}}(x)=ord_{P_{\infty}}(x)=-(q_{0}+1),$$
and
$$\frac{q_{0}+1}{m}ord_{\bar{P}_{\infty}}(y)=ord_{P_{\infty}}(y^{\frac{q_{0}+1}{m}})=\frac{q_{0}+1}{m}ord_{P_{\infty}}(y)=-\frac{q_{0}+1}{m}q_{0}.$$
Hence,
$$ord_{\bar{P}_{\infty}}(x)=-m,\textrm{ and }ord_{\bar{P}_{\infty}}(y)=-q_{0}.$$
\end{proof}


\begin{proposition}\label{sW5}
$H(\bar{P}_{\infty})=\left\langle m, q_{0}\right\rangle$.
\end{proposition}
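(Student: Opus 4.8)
The plan is to follow exactly the strategy used for Proposition \ref{sW}. From Proposition \ref{n3}(iii) we already know that $x$ has a pole of order $m$ and $y$ a pole of order $q_0$ at $\bar{P}_{\infty}$, so $m$ and $q_0$ are both non-gaps at $\bar{P}_{\infty}$ and hence the inclusion $\left\langle m, q_{0}\right\rangle\subseteq H(\bar{P}_{\infty})$ holds. Since the Weierstrass semigroup $H(\bar{P}_{\infty})$ has precisely $g$ gaps, where $g$ is the genus of $\cX_m$, and since an inclusion of numerical semigroups with equal finite gap number forces equality, it suffices to prove that the numerical semigroup $\left\langle m, q_{0}\right\rangle$ has exactly $g$ gaps.

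The first step is to verify that $m$ and $q_{0}$ are coprime. This is the one place where the standing hypothesis $m\mid q_{0}+1$ is used: from $m\mid q_{0}+1$ we get $q_{0}\equiv -1 \pmod{m}$, so $\gcd(m,q_{0})=\gcd(m,-1)=1$. Coprimality is precisely what allows one to compute the genus of $\left\langle m, q_{0}\right\rangle$ by a closed formula.

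The second step invokes the classical count of gaps for a two-generator numerical semigroup: by \cite[Proposition 5.33]{HLP}, the genus of the semigroup generated by the coprime integers $m$ and $q_{0}$ equals $\frac{1}{2}(m-1)(q_{0}-1)$. By Proposition \ref{n3}(ii) this is exactly the genus $g$ of $\cX_m$, and therefore $\left\langle m, q_{0}\right\rangle$ and $H(\bar{P}_{\infty})$ have the same number of gaps. Combined with the inclusion from the first paragraph, this yields $H(\bar{P}_{\infty})=\left\langle m, q_{0}\right\rangle$.

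I do not anticipate any genuine obstacle here: the argument is essentially a verbatim transcription of the proof of Proposition \ref{sW}, with $\frac{q_{0}}{s}$ replaced by $m$ and $q_{0}+1$ replaced by $q_{0}$. The only substantive (if elementary) point is the coprimality of $m$ and $q_{0}$, which is where the divisibility condition $m\mid q_{0}+1$ must be brought in; everything else reduces to comparing the two gap counts against the genus supplied by Proposition \ref{n3}(ii).
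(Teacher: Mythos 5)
Your proposal is correct and follows the paper's own proof essentially verbatim: the paper likewise notes that $q_0$ and $m$ are coprime, invokes \cite[Proposition 5.33]{HLP} to get that $\left\langle m,q_0\right\rangle$ has $\frac{1}{2}(q_0-1)(m-1)$ gaps, and concludes by comparison with the genus from Proposition \ref{n3}. You merely spell out two steps the paper leaves implicit (the coprimality via $m\mid q_0+1$, and the inclusion coming from the pole orders of $x$ and $y$), which is harmless.
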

\begin{proof}
 Note that ${q_0}$ and $m$ are coprime. Then by \cite[Proposition 5.33]{HLP} the genus of the semigroup generated by ${q_0}$ and $m$ is $\frac{1}{2}(q_0-1)(m-1)$. Then the assertion follows from  Proposition \ref{n3}.

\end{proof}

\begin{proposition}[\cite{GV}]\label{sW6}
There exists a point $P\in\cX_{m}$ such that
$$H(P)=\left\langle q_{0}+1-\frac{q_{0}+1}{m}, q_{0}+1-(\frac{q_{0}+1}{m}-1), \ldots, q_{0}+1 \right\rangle.$$
\end{proposition}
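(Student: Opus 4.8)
The plan is to reduce the statement to a gap count, exactly as in the proofs of Theorems \ref{sgr1} and \ref{sgr2}. Write $h=\frac{q_0+1}{m}$, so that $q_0+1=mh$ and $q_0=mh-1$, and the proposed generating set is the block of $h+1$ consecutive integers
\[
B=\{(m-1)h,\ (m-1)h+1,\ \ldots,\ mh\}=\{q_0+1-h,\ \ldots,\ q_0+1\}.
\]
Let $\Theta=\langle B\rangle$. First I would record that $\Theta\subseteq H(P)$ for a suitable $\FF_q$-rational point $P$: each element of $B$ is a non-gap at $P$, and a Weierstrass semigroup is closed under addition. Since the genus of $\cX_m$ equals the number of gaps of $H(P)$, and $\N\setminus H(P)\subseteq\N\setminus\Theta$, it suffices to prove that $\Theta$ has exactly $g=\frac12(q_0-1)(m-1)$ gaps; equality of the two finite gap sets then forces $H(P)=\Theta$.

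For the gap count I would exploit that $B$ consists of consecutive integers. Hence the set $G(s)$ of all sums of $s$ elements of $B$ is the full interval $G(s)=[\,s(m-1)h,\ smh\,]$, and $\Theta=\{0\}\cup\bigcup_{s\ge1}G(s)$. The integers lying strictly between $G(s)$ and $G(s+1)$ number
\[
(s+1)(m-1)h-smh-1=(m-1-s)h-1,
\]
which is nonnegative precisely for $0\le s\le m-2$ and which vanishes from $s=m-1$ on, the two blocks abutting there because $(m-1)\cdot mh=m\cdot(m-1)h$; from that point the intervals overlap and cover all large integers. Summing over the relevant range and reindexing by $t=m-1-s$ gives
\[
\sum_{t=1}^{m-1}\bigl(th-1\bigr)=\frac{m(m-1)}{2}\,h-(m-1),
\]
and substituting $q_0=mh-1$ turns this into $\frac12(q_0-1)(m-1)=g$, as required.

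The arithmetic above is routine; the main obstacle is the existence/containment step, i.e. exhibiting the point $P$ at which $q_0+1-h,\ldots,q_0+1$ are all non-gaps. I would obtain this from the maximal-curve structure via Proposition \ref{FGT4}: I would look for an $\FF_q$-rational point whose non-gaps not exceeding $q_0$ are exactly $(m-1)h,(m-1)h+1,\ldots,q_0$, equivalently (by part (iii)) whose Frobenius $\cD$-orders are exactly $\{0,1,\ldots,h,q_0+1\}$; part (iii) then returns $q_0+1,q_0,\ldots,q_0+1-h$ as non-gaps. Concretely I would pull functions back through the covering $\varphi\colon\cH\to\cX_m$ of Proposition \ref{n3} and track pole orders, using that every $\FF_q$-rational point of the Hermitian curve has semigroup $\langle q_0,q_0+1\rangle$ and that $\varphi$ multiplies orders by the ramification index; producing genuine non-gaps at $P$ this way (rather than merely a necessary divisibility constraint) is the delicate part. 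Alternatively, this existence is precisely what is furnished by \cite{GV}.
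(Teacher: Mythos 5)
The paper offers no proof of this proposition at all: it is quoted verbatim from \cite{GV}, so there is no internal argument to compare yours against. What you have written is nevertheless sound and is exactly in the spirit of the arguments the paper does carry out elsewhere (the interval-by-interval gap counts in the proofs of Theorems \ref{sgr1} and \ref{sgr2}, and the genus computations in Propositions \ref{sW} and \ref{sW5} via \cite[Proposition~5.33]{HLP}). Your reduction is valid: once $\Theta\subseteq H(P)$ is known, the inclusion of the finite gap sets $\N\setminus H(P)\subseteq\N\setminus\Theta$ together with equal cardinalities forces $H(P)=\Theta$; and your count of the gaps of the semigroup generated by the block $B=\{(m-1)h,\dots,mh\}$ is correct --- $G(s)$ really is the full interval $[s(m-1)h,\,smh]$ because $B$ consists of consecutive integers, the inter-block gaps number $(m-1-s)h-1$ for $0\le s\le m-2$, consecutive blocks abut from $s=m-1$ onward, and the total $\sum_{t=1}^{m-1}(th-1)=\tfrac12(m-1)(q_0-1)$ equals the genus from Proposition \ref{n3}(ii). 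The one step you do not actually establish is the containment $\Theta\subseteq H(P)$, i.e.\ the existence of an $\FF_q$-rational point at which all of $q_0+1-h,\dots,q_0+1$ are non-gaps: your appeal to Proposition \ref{FGT4}(iii) only translates the desired non-gap pattern into a desired pattern of $\cD$-orders, it does not exhibit a point realizing it, and pulling functions back through $\varphi$ of Proposition \ref{n3} most naturally produces the semigroup $\langle m,q_0\rangle$ at $\bar P_\infty$ rather than the one claimed here at an affine point. That existence is precisely the content of the citation to \cite{GV}; you are explicit about this, so your argument relies on the same external input as the paper itself, but usefully reduces what must be imported from ``the full semigroup identity'' to ``these $h+1$ integers are non-gaps at some rational point.''
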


Now we consider the curve $\cX_{m}$ for particular values of $q_0$ and $m$.

\subsection{$q_{0}=7$, $m=4$.}  

The curve $\cX_{m}$ has equation
$$Y^{4}=X^{7}+X,$$
its genus is $g=9$, and the number of its $\FF_{49}$-rational points is $176$. In Appendix,  $(D1a)$ will denote the Improved AG codes costructed from the Weierstrass semigroup of Proposition \ref{sW5}, that is
$H(\bar{P}_{\infty})=\left\langle 4,7\right\rangle$, and  $(D1b)$ those costructed from the  Weierstrass semigroup of Proposition \ref{sW6}, $H(P)=\left\langle 6, 7, 8\right\rangle$.

\subsection{$q_{0}=7$, $m=2$.}  

The curve $\cX_{m}$ has equation
$$Y^{2}=X^{7}+X,$$
its genus is $g=3$, and the number of its $\FF_{49}$-rational points is $92$. In Appendix, $(D2a)$ will denote the Improved AG codes costructed from the Weierstrass semigroup of Proposition \ref{sW5}, that is
$H(\bar{P}_{\infty})=\left\langle 2,7\right\rangle$, and $(D2b)$ those costructed from the  Weierstrass semigroup of Proposition \ref{sW6}, $H(P)=\left\langle 4, 5, 6, 7\right\rangle$.

\subsection{$q_{0}=8$, $m=3$.}

The curve $\cX_{m}$ has equation
$$Y^{3}=X^{8}+X,$$
its genus is $g=7$, and the number of its $\FF_{64}$-rational points is $177$. In Appendix, $(D3a)$ will denote the improved AG codes costructed from the Weierstrass semigroup of Proposition \ref{sW5}, that is
$H(\bar{P}_{\infty})=\left\langle 3, 8\right\rangle$, and $(D3b)$ those costructed from the  Weierstrass semigroup of Proposition \ref{sW6}, $H(P)=\left\langle 6, 7, 8, 9\right\rangle$.

\subsection{$q_{0}=9$, $m=5$.}

The curve $\cX_{m}$ has equation
$$Y^{5}=X^{9}+X,$$
its genus is $g=16$, and the number of its $\FF_{81}$-rational points is $370$. In Appendix, $(D4a)$ will denote the Improved AG codes costructed from the Weierstrass semigroup of Proposition \ref{sW5}, that is
$H(\bar{P}_{\infty})=\left\langle 5, 9\right\rangle$, and $(D4b)$  those costructed from the  Weierstrass semigroup of Proposition \ref{sW6}, $H(P)=\left\langle 8, 9, 10\right\rangle$.

\section{Weierstrass semigroups for  curves (E)}

Let $m$ be a divisor of $q-1$ and
let $\cX_{m}$ be the non-singular model of the plane curve
\begin{equation}\label{eq5}
Y^{\frac{q-1}{m}}=X(X+1)^{q_{0}-1}.
\end{equation}

\begin{proposition}
The curve $\cX_m$ is the
 image of the Hermitian curve by the rational map
\begin{equation}\label{quo2}
\varphi : \cH\rightarrow\cX_{m},\,\,\, (x,y)\mapsto (t,z)=(x^{q_{0}-1}, y^{m}).
\end{equation}
\end{proposition}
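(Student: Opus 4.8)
The plan is to verify directly that the coordinate functions $t=x^{q_{0}-1}$ and $z=y^{m}$ on $\cH$ satisfy the defining equation (\ref{eq5}) of $\cX_{m}$, exactly as in the proofs of the analogous statements for curves (C) and (D). The only arithmetic input is the factorisation $q-1=(q_{0}-1)(q_{0}+1)$, which is precisely what links the exponent $q_{0}-1$ appearing in $\varphi$ to the exponent $\frac{q-1}{m}$ appearing in (\ref{eq5}).

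First I would take an arbitrary point $(x,y)\in\cH$ and compute the relevant power of $z$. Since $m\mid q-1$ and $q-1=(q_{0}-1)(q_{0}+1)$,
$$z^{\frac{q-1}{m}}=\left(y^{m}\right)^{\frac{(q_{0}-1)(q_{0}+1)}{m}}=y^{(q_{0}-1)(q_{0}+1)}=\left(y^{q_{0}+1}\right)^{q_{0}-1}.$$
Next I would substitute the Hermitian relation (\ref{Hcurve}), namely $y^{q_{0}+1}=x^{q_{0}}+x$, obtaining $z^{\frac{q-1}{m}}=(x^{q_{0}}+x)^{q_{0}-1}$, and then factor out $x$ to recognise $t=x^{q_{0}-1}$:
$$z^{\frac{q-1}{m}}=\left(x\,(x^{q_{0}-1}+1)\right)^{q_{0}-1}=x^{q_{0}-1}\left(x^{q_{0}-1}+1\right)^{q_{0}-1}=t(t+1)^{q_{0}-1}.$$
Hence every point $\varphi(x,y)$ lies on the plane curve (\ref{eq5}).

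To conclude that $\cX_{m}$ is exactly the image, and not merely that the image is contained in (\ref{eq5}), I would note that $\varphi$ is a non-constant rational map, so the closure of its image is an irreducible curve; being contained in the irreducible plane curve defined by (\ref{eq5}), it must coincide with it, and $\cX_{m}$ is by definition the non-singular model of (\ref{eq5}). I do not expect a genuine obstacle here: the whole content is the single computation above, and the only point requiring care is tracking the exponents through $q-1=(q_{0}-1)(q_{0}+1)$ so that the $m$-th power in $z=y^{m}$ cancels cleanly against $\frac{q-1}{m}$.
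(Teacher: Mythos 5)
Your computation is exactly the chain of equalities in the paper's proof, merely read in the opposite direction: the paper verifies $t(t+1)^{q_0-1}=(X^{q_0}+X)^{q_0-1}=(Y^{q_0+1})^{q_0-1}=z^{\frac{q-1}{m}}$, while you start from $z^{\frac{q-1}{m}}$ and arrive at $t(t+1)^{q_0-1}$. The argument is correct and essentially identical to the paper's; your closing remark about the image being the whole irreducible curve is a harmless addition the paper leaves implicit.
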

\begin{proof}
Let $(X,Y)$ be a point in $\cH$. We need to prove that $\varphi(X,Y)$ satisfies (\ref{eq5}). This follows from
$$X^{q_{0}-1}(X^{q_{0}-1}+1)^{q_{0}-1}=(X^{q_{0}}+X)^{q_{0}-1}=(Y^{q_{0}+1})^{q_{0}-1}=(Y^{m})^{\frac{q-1}{m}}.$$
\end{proof}

It is easily seen that the rational functions $t=x^{q_{0}-1}$, and $z=y^{m}$ have just a pole $\bar{P}_{\infty}$, which is the image  by $\varphi$ of the only infinite point $P_{\infty}$ of $\cH$. Therefore, the ramification index $e_{P_{\infty}}$ is equal to $deg(\varphi)$.

Some properties of the curve $\cX_m$ were investigated in \cite[Corollary 4.9 and Example 6.3]{GSX}.

\begin{proposition}[\cite{GSX}]\label{grado} The genus of $\cX_m$ is equal to 
$$
\frac{1}{2m}(q_0-1)(q_0+1-d),
$$
where $d=(m,q_0+1)$.
The degree $deg(\varphi)$ of the rational map $\varphi$ is equal to $m$.
\end{proposition}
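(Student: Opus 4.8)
The plan is to prove the two assertions separately: the degree of $\varphi$ by a multiplicativity-of-degrees argument in a tower of coverings of the projective line, and the genus by applying Riemann--Hurwitz to the superelliptic presentation of $\cX_m$ as a cyclic cover of $\PP^1$. Neither part needs the other, so I would treat them independently.

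First I would compute $\deg(\varphi)$. Put $t=x^{q_{0}-1}$ and consider three coverings: the Hermitian projection $\pi_x\colon\cH\to\PP^1$, $(x,y)\mapsto x$, of degree $q_0+1$ because $y$ satisfies the degree-$(q_0+1)$ equation (\ref{Hcurve}); the map $\rho\colon\PP^1\to\PP^1$, $x\mapsto x^{q_0-1}$, of degree $q_0-1$; and the projection $\pi_t\colon\cX_m\to\PP^1$, $(t,z)\mapsto t$, of degree $N:=\frac{q-1}{m}$, since (\ref{eq5}) exhibits $z$ as a root of $z^{N}=t(t+1)^{q_0-1}$, a polynomial with a simple zero at $t=0$, so that this equation is irreducible of full degree $N$ in $z$. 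Both $\rho\circ\pi_x$ and $\pi_t\circ\varphi$ send a point of $\cH$ to the value of $x^{q_0-1}$, hence coincide as maps $\cH\to\PP^1$; therefore $\deg(\pi_t)\,\deg(\varphi)=\deg(\rho)\,\deg(\pi_x)$, that is $N\cdot\deg(\varphi)=(q_0-1)(q_0+1)=q-1=Nm$, giving $\deg(\varphi)=m$.

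For the genus I would forget $\varphi$ and work directly with $\pi_t\colon\cX_m\to\PP^1$ of degree $N$, defined by $z^{N}=f(t)$ with $f(t)=t(t+1)^{q_0-1}$. Because $N\mid q_0^2-1$ while $\gcd(q_0^2-1,q_0)=1$, we have $\gcd(N,q_0)=1$, and in particular $\gcd(N,p)=1$ for the characteristic $p\mid q_0$; thus $\bar{\FF}_q$ contains a primitive $N$-th root of unity and the cover is a tame Kummer cover. For such a cover the fibre over a point with $v(f)=\mu$ consists of $\gcd(N,\mu)$ points, each of ramification index $N/\gcd(N,\mu)$. The branch locus is $t=0$ ($v(f)=1$), $t=-1$ ($v(f)=q_0-1$) and $t=\infty$ ($v(f)=-q_0$); since $\gcd(N,1)=\gcd(N,q_0)=1$, the points $t=0$ and $t=\infty$ are totally ramified while $t=-1$ carries $\gcd(N,q_0-1)$ points. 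Riemann--Hurwitz then gives
$$2g-2=-2N+(N-1)+\big(N-\gcd(N,q_0-1)\big)+(N-1),$$
so that $2g=N-\gcd(N,q_0-1)$.

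It remains to identify $\gcd(N,q_0-1)$, and this is the delicate, purely arithmetic heart of the argument and the step I expect to be the main obstacle. Setting $d=\gcd(m,q_0+1)$ and $m'=m/d$, a valuation-by-valuation check (comparing $v_\ell(m)$ with $v_\ell(q_0\pm1)$ prime by prime, using $m\mid q_0^2-1$) shows that $m'\mid q_0-1$ and that $\gcd\!\big(\tfrac{q_0+1}{d},m'\big)=1$. Writing $N=\frac{q_0-1}{m'}\cdot\frac{q_0+1}{d}$ and applying $\gcd(AB,Am')=A\,\gcd(B,m')$ with $A=\frac{q_0-1}{m'}$ and $B=\frac{q_0+1}{d}$, this yields $\gcd(N,q_0-1)=\frac{q_0-1}{m'}=\frac{(q_0-1)d}{m}$. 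Substituting into $2g=N-\gcd(N,q_0-1)=\frac{(q_0-1)(q_0+1)}{m}-\frac{(q_0-1)d}{m}$ gives $g=\frac{1}{2m}(q_0-1)(q_0+1-d)$, as claimed. Everything geometric is routine once the ramification is recognized as tame; the real work is matching $\gcd(N,q_0-1)$ with $\frac{(q_0-1)d}{m}$, which rests on the number-theoretic lemmas above and on the standing hypothesis $m\mid q_0^2-1$.
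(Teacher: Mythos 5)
Your argument is correct, but it is worth noting that the paper itself gives no proof of this proposition at all: it is quoted verbatim from Garcia--Stichtenoth--Xing \cite{GSX}, where the genus is obtained by analysing ramification in subfields of the Hermitian function field. Your route is genuinely different and self-contained: you get $\deg(\varphi)=m$ from multiplicativity of degrees in the tower $\FF_q(t)\subset\FF_q(t,z)\subset\FF_q(x,y)$ (using $[\FF_q(x,y):\FF_q(x)]=q_0+1$, $[\FF_q(x):\FF_q(t)]=q_0-1$, and irreducibility of $z^N-t(t+1)^{q_0-1}$, which indeed follows from $v_0(f)=1$), and you get the genus by forgetting the Hermitian cover entirely and treating $\cX_m$ directly as a tame Kummer cover of $\PP^1$. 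The ramification data are right: $\gcd(N,1)=\gcd(N,q_0)=1$ forces total ramification over $t=0$ and $t=\infty$, the fibre over $t=-1$ has $\gcd(N,q_0-1)$ points, and Riemann--Hurwitz gives $2g=N-\gcd(N,q_0-1)$. The arithmetic identity $\gcd(N,q_0-1)=\frac{(q_0-1)d}{m}$, which you correctly single out as the real content, does hold: with $m'=m/d$ one checks prime by prime (using only $v_\ell(m)\le v_\ell(q_0-1)+v_\ell(q_0+1)$) that $m'\mid q_0-1$ and $\gcd\bigl(\frac{q_0+1}{d},m'\bigr)=1$, and then $\gcd(AB,Am')=A\gcd(B,m')$ finishes it. What your approach buys is independence from \cite{GSX} and from the covering $\varphi$ itself for the genus computation; what it costs is the number-theoretic lemma, which the quotient-curve viewpoint of \cite{GSX} absorbs into the count of fixed points of the group $G_m$ (essentially the computation the paper does reprove later in Lemma \ref{ram}, where the stabilizer order $(m,q_0+1)$ appears for the same reason).
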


\begin{proposition}[\cite{GSX}] Let $\alpha$ be a primitive $m$-th root of unity in $\FF_{q}$, and let
$$G_{m}=\left\langle \phi :\cH\rightarrow\cH\, |\,\, (X,Y)\mapsto(\alpha^{q_{0}+1}, \alpha Y)\right\rangle.$$
 Then $\cX_{m}$ can be seen as the quotient curve of $\cH$ by the group $G_{m}$.
\end{proposition}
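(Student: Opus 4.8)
The plan is to recognise the morphism $\varphi$ of (\ref{quo2}) as the quotient map $\cH\to\cH/G_m$, playing off the degree equality $\deg(\varphi)=m$ from Proposition~\ref{grado} against the order $|G_m|=m$. First I would read the generator correctly as $\phi\colon(X,Y)\mapsto(\alpha^{q_0+1}X,\alpha Y)$; the displayed formula omits the factor $X$ in the first coordinate. The opening step is to verify that $\phi$ is a well-defined automorphism of $\cH$, that is, that it preserves the affine relation $Y^{q_0+1}=X^{q_0}+X$. Substituting $(\alpha^{q_0+1}X,\alpha Y)$ and using $Y^{q_0+1}=X^{q_0}+X$, the coefficients of $X$ match automatically while the coefficients of $X^{q_0}$ agree precisely when $\alpha^{(q_0+1)(q_0-1)}=\alpha^{q-1}=1$, which holds because $m\mid q-1$ and $\alpha$ has order $m$. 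The same computation shows that no other scalar on the first coordinate works, so $\phi$ is essentially forced.

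Next I would pin down the group. Since $\phi^{k}\colon(X,Y)\mapsto(\alpha^{k(q_0+1)}X,\alpha^{k}Y)$, the power $\phi^{k}$ is the identity exactly when $\alpha^{k}=1$, i.e. when $m\mid k$; hence $G_m=\langle\phi\rangle$ is cyclic of order $m$. The heart of the argument is then the $G_m$-invariance of $\varphi$: one computes $\varphi(\phi(x,y))=\bigl((\alpha^{q_0+1}x)^{q_0-1},(\alpha y)^{m}\bigr)=\bigl(\alpha^{q-1}x^{q_0-1},\alpha^{m}y^{m}\bigr)=(x^{q_0-1},y^{m})=\varphi(x,y)$, where both simplifications use $\alpha^{m}=1$ together with $\alpha^{q-1}=1$. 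Thus $\varphi$ is constant on every $G_m$-orbit.

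Finally I would conclude by degree comparison. Being $G_m$-invariant, $\varphi$ factors through the quotient as $\varphi=\bar\varphi\circ\pi$ with $\pi\colon\cH\to\cH/G_m$ the canonical projection. Then $\deg(\varphi)=\deg(\bar\varphi)\cdot\deg(\pi)$, and since $\deg(\varphi)=m=|G_m|=\deg(\pi)$ we get $\deg(\bar\varphi)=1$; as $\bar\varphi$ is a birational morphism between nonsingular projective curves it is an isomorphism, so $\cX_m\cong\cH/G_m$. Alternatively, the same conclusion follows in the function-field language: $\FF_q(t,z)=\FF_q(x^{q_0-1},y^{m})$ is contained in the fixed field $\FF_q(\cH)^{G_m}$, and both have index $m$ in $\FF_q(\cH)$, forcing equality. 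I expect no genuine obstacle here: the entire argument is bookkeeping, and the only delicate point is getting the exponent $\alpha^{q_0+1}$ on the first coordinate right and threading the two identities $\alpha^{m}=1$ and $\alpha^{q-1}=1$ through both the automorphism check and the invariance check.
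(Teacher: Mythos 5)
The paper offers no proof of this proposition at all: it is imported verbatim from \cite{GSX}, so there is nothing internal to compare your argument against. Judged on its own merits, your proof is correct and complete. You are right that the displayed generator has a typo and must read $(X,Y)\mapsto(\alpha^{q_0+1}X,\alpha Y)$; your check that $\phi$ preserves $Y^{q_0+1}=X^{q_0}+X$ correctly reduces to $\alpha^{(q_0+1)(q_0-1)}=\alpha^{q-1}=1$, which holds for every $\alpha\in\FF_q^{*}$; the order computation gives $|G_m|=m$; and the invariance $\varphi\circ\phi=\varphi$ follows from $\alpha^{m}=\alpha^{q-1}=1$ exactly as you say. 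The concluding step --- $\varphi$ factors through $\cH/G_m$, and $\deg(\varphi)=m=|G_m|$ forces the induced map to be birational, hence an isomorphism of nonsingular projective curves --- is the standard quotient argument; the only caveat is that the input $\deg(\varphi)=m$ is itself Proposition~\ref{grado}, which the paper also quotes from \cite{GSX} without proof, so your argument is self-contained only modulo that citation. Your function-field variant ($\FF_{q}(t,z)\subseteq\FF_{q}(\cH)^{G_m}$ with both subfields of index $m$ in $\FF_q(\cH)$) closes the same gap by Galois theory and is, if anything, the cleaner way to phrase it.
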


Note that a point $(a,b)$ of the plane curve (\ref{eq5}) is non-singular provided that $b\neq 0$. Let $P_{(a,b)}$ be the only point of $\cX_m$
lying over at $(a,b)$.
\begin{proposition} Let $b\neq 0$. Then the size of 
$\varphi^{-1}(P_{(a,b)})$ is equal to $m$.
\end{proposition}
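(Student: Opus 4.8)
The plan is to show that the fiber $\varphi^{-1}(P_{(a,b)})$ over a non-singular point with $b\neq 0$ consists of exactly $m$ points by proving that $\varphi$ is unramified there and then invoking that $\deg(\varphi)=m$ from Proposition~\ref{grado}. Since $\varphi$ is a degree-$m$ morphism of smooth curves, for any point $Q$ of $\cX_m$ one has $\sum_{R\in\varphi^{-1}(Q)} e_R = m$, where $e_R$ is the ramification index at $R$; hence it suffices to verify that every $R$ over $P_{(a,b)}$ satisfies $e_R=1$, which forces the fiber to have cardinality exactly $m$. So the whole statement reduces to a local ramification computation at the points lying over $(a,b)$.

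First I would exploit the group-quotient description: by the preceding proposition, $\cX_m$ is the quotient of $\cH$ by the cyclic group $G_m$ of order $m$ generated by $\phi:(X,Y)\mapsto(\alpha^{q_0+1}X,\alpha Y)$ (note $|G_m|=m=\deg\varphi$, consistent with Proposition~\ref{grado}). For a Galois cover by a group $G$, a point $R\in\cX_m$ is ramified precisely when its preimages in $\cH$ have non-trivial stabilizer in $G_m$, and the common ramification index equals the order of that stabilizer. Thus I would determine the fixed points of the non-trivial elements of $G_m$ on $\cH$ and check that none of them maps under $\varphi$ to a point $P_{(a,b)}$ with $b\neq 0$. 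An element $\phi^k$ with $1\le k\le m-1$ fixes a point $(X,Y)\in\cH$ only if $\alpha^{k}Y=Y$, i.e. $Y=0$ (since $\alpha^k\neq 1$), or the point lies over $P_\infty$. Because the map on the second coordinate is $z=y^m$, a fixed point with $Y=0$ has image with $z$-coordinate $0$, and the $z$-coordinate of $P_{(a,b)}$ is $b\neq 0$; likewise $P_\infty$ maps to the point at infinity $\bar P_\infty$, not to an affine point. Hence no ramification point of the cover lies over $P_{(a,b)}$, so $e_R=1$ for each $R$ in the fiber.

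Concretely, $\varphi(x,y)=(x^{q_0-1},y^m)$, so a point $(a,b)$ with $b\neq 0$ in the fiber corresponds to choices of $(x,y)\in\cH$ with $x^{q_0-1}=a$ and $y^m=b$; the $G_m$-action permutes these preimages freely (since the stabilizers are trivial when $b\neq 0$), and the orbit is a single point of $\cX_m$. Counting preimages in $\cH$ and dividing by the size of the free orbits, or equivalently applying $\sum_R e_R=m$ with all $e_R=1$, yields exactly $m$ points in $\varphi^{-1}(P_{(a,b)})$.

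The main obstacle I anticipate is handling the fixed-point analysis cleanly, in particular making sure the action of $G_m$ on $\cH$ is well-defined and that I correctly identify which points of $\cH$ are fixed by some non-trivial $\phi^k$; one must be careful that $\alpha^{k(q_0+1)}$ could equal $1$ for some $k$ even though $\alpha^k\neq 1$, so the $X$-coordinate condition alone does not pin down the fixed locus, and the decisive constraint must come from the $Y$-coordinate $\alpha^k Y=Y$. Once it is established that every non-trivial stabilizer forces $Y=0$ (equivalently $z=0$) or the point at infinity, the conclusion that $b\neq 0$ rules out ramification is immediate, and the count follows at once.
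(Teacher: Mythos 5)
Your proof is correct, but it runs along a genuinely different track from the paper's. The paper argues directly at the level of coordinates: it identifies $\varphi^{-1}(P_{(a,b)})$ with $\{(X,Y)\in\cH \mid X^{q_0-1}=a,\ Y^m=b\}$, observes that $Y^m-b$ has $m$ distinct roots because $\mathrm{char}(\bar{\FF}_q)\nmid m$ (so the fiber has at least $m$ points), and then caps the count by $\deg(\varphi)=m$ from Proposition \ref{grado}. You instead use the Galois description $\cX_m\cong\cH/G_m$ and reduce everything to the statement that a non-trivial $\phi^k$ can only fix points with $Y=0$ or the point at infinity, so the fiber over an affine point with $b\neq 0$ is a free orbit; the identity $\sum_R e_R=\deg\varphi$ then gives exactly $m$ points. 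Both arguments are sound. Your route is somewhat more robust: it does not need to worry about whether each of the $m$ roots of $Y^m=b$ actually lifts to a distinct point of $\cH$ with the prescribed $X$-coordinate (a step the paper leaves implicit, and which is most cleanly justified by exactly the group action you invoke), and the same stabilizer computation immediately yields the ramification indices at the exceptional points, which the paper only carries out afterwards in Lemma \ref{ram}. The paper's route is more elementary, using nothing beyond the degree bound on fiber sizes. One small point of care you already flag correctly: the condition $\alpha^{k(q_0+1)}X=X$ alone does not isolate the fixed locus (since $d=(m,q_0+1)$ may exceed $1$), and the decisive constraint is indeed $\alpha^k Y=Y$ forcing $Y=0$.
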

\begin{proof}
Clearly, $\varphi^{-1}(P_{(a,b)})=\left\{(X,Y)\in\cH |\,X^{q_{0}-1}=a, Y^{m}=b\right\}$.
The number of roots of $Y^{m}-b$ is $m$, for all $b\neq0$, since that char$(\bar{\FF}_{q})\nmid m$. Hence, $\#\varphi^{-1}(a,b)\geq m$, when $b\neq0$. Now taking into account Proposition \ref{grado}, we have also that $\#\varphi^{-1}(a,b)\leq m$. Then the assertion follows.
\end{proof}

Let $(a,0)$ be a point of the plane curve (\ref{eq5}).  If $a=0$, then $(a,0)$ is non-singular. Let $P_{(0,0)}$ be the only point of $\cX_m$ lying over $(0,0)$. Clearly,  $\varphi^{-1}(P_{(0,0)})=\left\{P_{0}=(0,0)\in \cH\right\}$. Hence, $e_{P_{0}}=deg(\varphi)=m$.
Assume now that $a\neq0$. Then $a=-1$. We consider the set $L$ of points of $\cH$ whose image by $\varphi$ is a point of $\cX_m$ lying over $(-1,0)$. Clearly, $L=\left\{(X,Y)\in\cH |\,X^{q_{0}-1}=-1, Y^{m}=0\right\}=\left\{(X,0)\in\cH |\,X^{q_{0}-1}+1=0\right\}$. Let $\left\{P_{1},P_{2},\ldots,P_{q_{0}-1}\right\}$ be the points in $\cH$ such that $Y=0$ and $X^{q_{0}-1}+1=0$.

\begin{lemma}\label{ram}
Let $\varphi$ be as in {\rm (\ref{quo2})}. The ramification points of $\varphi$ are $P_{0},P_{\infty},P_{1},P_{2},\ldots,P_{q_{0}-1}$.
In particular, $e_{P_{0}}=e_{P_{\infty}}=m$, and $e_{P_{i}}=d$, for $1\leq i\leq q_{0}-1$, where $d=(m, q_{0}+1)$.
\end{lemma}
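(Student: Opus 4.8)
The plan is to treat $\varphi$ as the quotient map of the Galois covering $\cH\to\cH/G_{m}=\cX_{m}$ and to read off all ramification from point stabilizers. First I would check that $\varphi$ really is this quotient map: both coordinate functions $t=x^{q_{0}-1}$ and $z=y^{m}$ are $G_{m}$-invariant, since for the generator $\phi(X,Y)=(\alpha^{q_{0}+1}X,\alpha Y)$ one has $\phi^{*}(x^{q_{0}-1})=\alpha^{(q_{0}+1)(q_{0}-1)}x^{q_{0}-1}=\alpha^{q-1}x^{q_{0}-1}=x^{q_{0}-1}$ (as $m\mid q-1$) and $\phi^{*}(y^{m})=\alpha^{m}y^{m}=y^{m}$; hence $\varphi$ factors through $\cH/G_{m}$, and since $\deg\varphi=m=|G_{m}|$ by Proposition \ref{grado}, it coincides with the quotient map. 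Because $m\mid q-1$ while $\mathrm{char}(\FF_{q})\mid q$, we have $\gcd(m,\mathrm{char}(\FF_{q}))=1$, so the covering is tame; for a tame Galois covering a point $P$ is a ramification point exactly when $\mathrm{Stab}_{G_{m}}(P)\neq\{1\}$, and then $e_{P}=|\mathrm{Stab}_{G_{m}}(P)|$. This reduces the whole lemma to computing stabilizers.

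Next I would run through all points of $\cH$. Because $\phi$ acts linearly, $P_{\infty}$ is fixed by all of $G_{m}$, giving $e_{P_{\infty}}=m$. For an affine point $(X_{0},Y_{0})$, fixing it under $\phi^{k}$ forces $\alpha^{k}Y_{0}=Y_{0}$; if $Y_{0}\neq0$ this yields $\alpha^{k}=1$, i.e.\ $m\mid k$, so these points are unramified (consistent with the earlier fibre count over $b\neq0$). The points with $Y_{0}=0$ satisfy $X_{0}(X_{0}^{q_{0}-1}+1)=0$ on $\cH$: the value $X_{0}=0$ gives $P_{0}$, fixed by all of $G_{m}$ so $e_{P_{0}}=m$, while $X_{0}^{q_{0}-1}=-1$ gives exactly $P_{1},\dots,P_{q_{0}-1}$, for which $\phi^{k}$ fixes the point iff $\alpha^{k(q_{0}+1)}=1$, i.e.\ iff $m\mid k(q_{0}+1)$. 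Since these three families exhaust all points of $\cH$, the ramification locus is forced to be $\{P_{0},P_{\infty},P_{1},\dots,P_{q_{0}-1}\}$.

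The crux is evaluating the stabilizer of each $P_{i}$: writing $d=\gcd(m,q_{0}+1)$, I would use that $m\mid k(q_{0}+1)$ iff $\tfrac{m}{d}\mid k$ (because $\tfrac{m}{d}$ and $\tfrac{q_{0}+1}{d}$ are coprime), so $\mathrm{Stab}_{G_{m}}(P_{i})=\langle\phi^{m/d}\rangle$ has order $d$ and $e_{P_{i}}=d$. I expect the main obstacle to be less this congruence than resisting a tempting wrong shortcut: the $q_{0}-1$ points $P_{i}$ do \emph{not} all lie in a single fibre of $\varphi$ (the point $(-1,0)$ is singular on the plane model, with several points of $\cX_{m}$ lying over it), so one cannot read $e_{P_{i}}$ off the naive identity $\sum e=m$ applied to one fibre; the stabilizer computation sidesteps this entirely. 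Finally, as a safeguard against having overlooked any ramification, I would substitute the data into Riemann--Hurwitz, $2g_{\cH}-2=m(2g_{\cX_{m}}-2)+\sum_{P}(e_{P}-1)$, with $g_{\cH}=\tfrac{1}{2}q_{0}(q_{0}-1)$ and $g_{\cX_{m}}$ from Proposition \ref{grado}: both sides collapse to $q_{0}^{2}-q_{0}-2$, confirming that the list is complete.
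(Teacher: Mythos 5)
Your proof is correct and follows essentially the same route as the paper's: both identify $\varphi$ with the quotient map $\cH\to\cH/G_{m}$ and compute $e_{P}$ as the order of the stabilizer of $P$ in $G_{m}$, obtaining $\#\mathrm{Stab}_{(X,0)}(G_{m})=\#\{0\le k<m:\alpha^{k(q_{0}+1)}=1\}=(m,q_{0}+1)$ for the points $P_{i}$. The paper is merely terser, taking $e_{P_{0}}=e_{P_{\infty}}=m$ from the preceding computations and omitting your (welcome but inessential) checks of $G_{m}$-invariance, tameness, and Riemann--Hurwitz.
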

\begin{proof} By the previous results, we only need to calculate $e_{P_{i}}$.
Being $\cX\cong\cH/G_{m}$, the integer $e_{P}$ represents the stabilizer of $G_{m}$ at $P$, for $P\in\cH$; i.e. $e_{P}=\# Stab_{P}(G_{m})$.
Note that $\#Stab_{(X,0)}(G_{m})=\#\left\{0\leq i <m |\, \alpha^{i(q_{0}+1)}=1\right\}=(m, q_{0}+1)$. Hence, $$e_{P_{i}}=d,$$ where $d=(m, q_{0}+1)$.
\end{proof}

A straightforward corollary to Lemma \ref{ram} is that  there are exactly $\frac{d(q_{0}-1)}{m}$ points of $\cX_m$, say $\bar{P}_{1},\bar{P}_{2},\ldots,\bar{P}_{\frac{d(q_{0}-1)}{m}}$, lying over $(-1, 0)$.

\begin{proposition}
Let $D=\bar{P}_{1}+\bar{P}_{2}+\ldots+\bar{P}_{\frac{d(q_{0}+1)}{m}}$. Then
$$(z)=\frac{m}{d}D+\bar{P}_{0}-q_{0}\bar{P}_{\infty},\,\,\,\,\,\,\,(t+1)=\frac{q_{0}+1}{d}D-\frac{q-1}{m}\bar{P}_{\infty}.$$
\end{proposition}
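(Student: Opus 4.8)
The plan is to compute both divisors on $\cX_m$ by pulling everything back to the Hermitian curve $\cH$ via $\varphi$ and using the transfer formula $ord_P(\varphi^*f)=e_P\,ord_{\bar P}(f)$, where $\bar P=\varphi(P)$ and $e_P$ is the ramification index recorded in Lemma \ref{ram}. Since $\varphi^*z=y^m$ and $\varphi^*(t+1)=x^{q_0-1}+1$, it is enough to know the divisors of $y$ and of $x^{q_0-1}+1$ on $\cH$ and then distribute the orders among the fibres.

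First I would determine $(y)$ on $\cH$. The zeros of $y$ are the affine points $(a,0)$ with $a^{q_0}+a=0$, i.e. the $q_0$ points $P_a$ with $a\in H$. Because $x:\cH\to\PP^1$ has degree $q_0+1$ and $y^{q_0+1}=x^{q_0}+x$ forces a single point above each value $a\in H$, the map $x$ is totally ramified there, so $ord_{P_a}(x-a)=q_0+1$ and hence $ord_{P_a}(y)=1$; together with $ord_{P_\infty}(y)=-q_0$ this gives $(y)=\sum_{a\in H}P_a-q_0P_\infty$. Among these zeros, $a=0$ gives $P_0$, which $\varphi$ sends to $\bar P_0$, while the remaining $q_0-1$ values (exactly those with $a^{q_0-1}=-1$) are the points $P_1,\dots,P_{q_0-1}$ lying over $(-1,0)$. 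Applying the transfer formula with $e_{P_0}=e_{P_\infty}=m$ and $e_{P_i}=d$ yields $ord_{\bar P_0}(z)=m\cdot 1/m=1$, $ord_{\bar P_\infty}(z)=m(-q_0)/m=-q_0$, and $ord_{\bar P_j}(z)=m/d$ at each of the $\tfrac{d(q_0-1)}{m}$ points over $(-1,0)$, which are precisely the points summed in $D$. Since every zero and pole of $z$ arises in this way, this is the complete divisor, and the degrees balance as $\tfrac md\cdot\tfrac{d(q_0-1)}m+1-q_0=0$.

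The computation for $t+1$ is parallel. Writing $x^{q_0-1}+1=\prod_{c^{q_0-1}=-1}(x-c)$, at each point $P_a$ over $(-1,0)$ exactly one factor vanishes, so $ord_{P_a}(x^{q_0-1}+1)=ord_{P_a}(x-a)=q_0+1$, while $ord_{P_\infty}(x^{q_0-1}+1)=(q_0-1)\,ord_{P_\infty}(x)=-(q-1)$. The transfer formula then gives $ord_{\bar P_j}(t+1)=\tfrac{q_0+1}{d}$ and $ord_{\bar P_\infty}(t+1)=-\tfrac{q-1}{m}$, and since $x^{q_0-1}+1$ has no further zeros or poles on $\cX_m$, this is exactly $(t+1)=\tfrac{q_0+1}{d}D-\tfrac{q-1}{m}\bar P_\infty$.

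The only genuinely delicate steps are the two local order computations on $\cH$: that $y$ vanishes simply at each $P_a$ and that $x^{q_0-1}+1$ vanishes to order $q_0+1$ there. Both reduce to the total ramification of the degree-$(q_0+1)$ map $x:\cH\to\PP^1$ over the values $a\in H$, which is immediate from $y^{q_0+1}=x^{q_0}+x$ having a single point above each such $a$. Everything after that is bookkeeping with the ramification indices of Lemma \ref{ram} and the degree check.
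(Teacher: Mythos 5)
Your proof is correct and follows essentially the same route as the paper's: both compute the orders of $z$ and $t+1$ at $\bar{P}_{0}$, $\bar{P}_{\infty}$, and the points over $(-1,0)$ by pulling back to $\cH$ and dividing by the ramification indices of Lemma \ref{ram}, then reading off the divisors. The only difference is that you explicitly justify the local orders $ord_{P_a}(y)=1$ and $ord_{P_a}(x^{q_0-1}+1)=q_0+1$ via the total ramification of $x$ over the roots of $X^{q_0}+X$, steps the paper subsumes under ``straightforward computation,'' and you correctly use $\frac{d(q_0-1)}{m}$ (not $\frac{d(q_0+1)}{m}$) points in $D$, consistent with the degree check.
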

\begin{proof}
The assertion follows from the following straightforward computation:
\begin{enumerate}
\item[$\bullet$] $$ord_{\bar{P}_{0}}(z)=\frac{1}{m}ord_{P_{0}}(y^{m})=ord_{P_{0}}(y)=1;$$
$$ord_{\bar{P}_{0}}(t+1)=\frac{1}{m}ord_{P_{0}}(x^{q_{0}-1}+1)=0;$$
\item[$\bullet$] $$ord_{\bar{P}_{\infty}}(z)=\frac{1}{m}ord_{P_{\infty}}(y^{m})=-q_{0};$$
$$ord_{\bar{P}_{\infty}}(t+1)=\frac{1}{m}ord_{P_{\infty}}(x^{q_{0}-1}+1)=\frac{q_{0}-1}{m}ord_{P_{\infty}}(x)=-\frac{q_{0}-1}{m};$$
\item[$\bullet$] for $1\leq i\leq \frac{d(q_{0}-1)}{m}$,
$$ord_{\bar{P}_{i}}(z)=\frac{1}{d}ord_{P_{i}}(y^{m})=\frac{m}{d};$$
$$ord_{\bar{P}_{i}}(t+1)=\frac{1}{d}ord_{P_{i}}(x^{q_{0}-1}+1)=\frac{q_{0}+1}{d}.$$
\end{enumerate}
\end{proof}

\begin{proposition}\label{pp}
Let $i,j\in\N_{0}$ such that
$$i\geq j\frac{q_{0}+1}{m}.$$
Then $iq_{0}-j\frac{q-1}{m}\in H(\bar{P}_{\infty})$.
\end{proposition}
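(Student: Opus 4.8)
\emph{Plan.} The statement is an explicit membership claim for the Weierstrass semigroup $H(\bar{P}_{\infty})$, so the natural strategy is to exhibit a single rational function on $\cX_m$ whose only pole is $\bar{P}_{\infty}$ and whose pole order there is exactly $iq_{0}-j\frac{q-1}{m}$. The building blocks are already prepared: the preceding proposition gives the principal divisors
$$(z)=\frac{m}{d}D+\bar{P}_{0}-q_{0}\bar{P}_{\infty},\qquad (t+1)=\frac{q_{0}+1}{d}D-\frac{q-1}{m}\bar{P}_{\infty},$$
where $d=(m,q_{0}+1)$ and $D$ is the effective divisor supported on the points lying over $(-1,0)$. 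The function to use is $f=z^{i}(t+1)^{-j}$.

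First I would record two preliminary facts. Since $m$ is a divisor of $q-1$, the quantity $\frac{q-1}{m}$ is an integer, hence $iq_{0}-j\frac{q-1}{m}\in\ZZ$. Next, writing $q-1=(q_{0}-1)(q_{0}+1)$, the hypothesis $i\geq j\frac{q_{0}+1}{m}$ gives $iq_{0}\geq j\frac{q_{0}(q_{0}+1)}{m}>j\frac{(q_{0}-1)(q_{0}+1)}{m}=j\frac{q-1}{m}$ whenever $j>0$, and $iq_{0}\geq 0$ when $j=0$; in all cases $iq_{0}-j\frac{q-1}{m}\geq 0$, so the target number is a legitimate candidate for membership in $H(\bar{P}_{\infty})$ (with the convention $0\in H(\bar{P}_{\infty})$ covering the degenerate case).

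The core of the argument is the divisor computation
$$(f)=i(z)-j(t+1)=\Bigl(i\tfrac{m}{d}-j\tfrac{q_{0}+1}{d}\Bigr)D+i\bar{P}_{0}-\Bigl(iq_{0}-j\tfrac{q-1}{m}\Bigr)\bar{P}_{\infty}.$$
The coefficient of $\bar{P}_{0}$ is $i\geq 0$, so $f$ has no pole there. The key observation is that the coefficient of $D$ is nonnegative precisely under the hypothesis: $i\frac{m}{d}-j\frac{q_{0}+1}{d}\geq 0$ is equivalent to $im\geq j(q_{0}+1)$, i.e. to $i\geq j\frac{q_{0}+1}{m}$. Thus $f$ has no pole along $D$ either, and $\bar{P}_{\infty}$ is its only pole, of order exactly $iq_{0}-j\frac{q-1}{m}$. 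Consequently $iq_{0}-j\frac{q-1}{m}\in H(\bar{P}_{\infty})$, which is the assertion.

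I expect no serious obstacle here: once the function $f=z^{i}(t+1)^{-j}$ is chosen, everything reduces to the divisor arithmetic above, and the only real point is to notice that the nonnegativity of the coefficient of $D$ is exactly equivalent to the stated inequality $i\geq j\frac{q_{0}+1}{m}$. The one thing to verify carefully is that the hypothesis indeed forces the target pole order to be a genuine nonnegative integer, which is handled by the two preliminary remarks.
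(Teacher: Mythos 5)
Your proof is correct and follows essentially the same route as the paper: both take the function $z^{i}(t+1)^{-j}$, expand its divisor using the principal divisors of $z$ and $t+1$ from the preceding proposition, and read off the pole divisor $(iq_{0}-j\frac{q-1}{m})\bar{P}_{\infty}$. Your write-up is in fact slightly more careful than the paper's, since you make explicit that the hypothesis $i\geq j\frac{q_{0}+1}{m}$ is exactly what makes the coefficient of $D$ nonnegative (the paper leaves this implicit) and you check that the target integer is nonnegative.
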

\begin{proof}
Let $\gamma=z^{i}(t+1)^{-j}$ and $D=\bar{P}_{1}+\bar{P}_{2}+\ldots+\bar{P}_{\frac{d(q_{0}+1)}{m}}$. Then
$$(\gamma)=\frac{im}{d}D+\bar{P}_{0}-iq_{0}\bar{P}_{\infty}-\frac{j(q_{0}+1)}{d}D+\frac{j(q-1)}{m}\bar{P}_{\infty}.$$
Therefore,
$$(\gamma)_{\infty}=(iq_{0}-j\frac{q-1}{m})\bar{P}_{\infty}.$$
\end{proof}


\subsection{$q_{0}=7$, $m=3$}   

The curve $\cX_{m}$ has equation
$$Y^{16}=X(X+1)^{6},$$
its genus is $g=7$, and the number of its $\FF_{49}$-rational points is $148$.
By Proposition \ref{pp} we have that $5, 7, 8$ are non-gaps at $\bar{P}_{\infty}$. Moreover, the genus of the numerical semigroup generated by these integers is equal to the genus of $\cX_{m}$. Hence, $H(\bar{P}_{\infty})=\left\langle 5, 7, 8\right\rangle$. In Appendix, $(E1)$ will denote the Improved AG codes costructed from $H(\bar{P}_{\infty})$.

\subsection{$q_{0}=9$, $m=4$}   

The curve $\cX_{m}$ has equation
$$Y^{20}=X(X+1)^{8},$$
its genus is $g=8$, and the number of its $\FF_{81}$-rational points is $226$.
By Proposition \ref{pp} we have that $5, 7, 9$ are non-gaps at $\bar{P}_{\infty}$. Moreover, the genus of the numerical semigroup generated by these integers is equal to the genus of $\cX_{m}$. Hence, $H(\bar{P}_{\infty})=\left\langle 5, 7, 9\right\rangle$. In Appendix, $(E2)$ will denote the Improved AG codes costructed from $H(\bar{P}_{\infty})$.

\subsection{ $q_{0}=16$, $m=5$}  

The curve $\cX_{m}$ has equation
$$Y^{51}=X(X+1)^{15},$$
its genus is $g=24$, and the number of its $\FF_{256}$-rational points is $1025$.
By Proposition \ref{pp} we have that $10, 13, 16, 17$ are non-gaps at $\bar{P}_{\infty}$. Moreover, the genus of the numerical semigroup generated by these integers is equal to the genus of $\cX_{m}$. Hence, $H(\bar{P}_{\infty})=\left\langle 10, 13, 16, 17\right\rangle$. 
In Appendix, $(E3)$ will denote the Improved AG codes costructed from $H(\bar{P}_{\infty})$.

\section*{Appendix: Improvements on MinT's tables}

In this Appendix, we consider the parameters of some of the codes ${\tilde C}_d(P)$ (see Definition \ref{n5}), where $P$ is a point of a curve $\cX$ belonging to one of the families (A)-(E).

We recall the following propagation rules.
\begin{proposition}[see Exercise 7 in \cite{TV}]\label{spoiling}${}$
\begin{enumerate}
\item[{\rm (i)}] If there exists a $q$-ary linear code of lenght $n$, dimension $k$ and minimum distance $d$, then for each non-negative integer $s< d$ there exists a $q$-ary linear code of length $n$, dimension $k$ and minimum distance $d-s$.

\item[{\rm (ii)}] If there exists a $q$-ary linear code of lenght $n$, dimension $k$ and minimum distance $d$, then for each non-negative integer $s< k$ there exists a $q$-ary linear code of length $n$, dimension $k-s$ and minimum distance $d$.
\item[{\rm (iii)}] If there exists a $q$-ary linear code of lenght $n$, dimension $k$ and minimum distance $d$, then for each non-negative integer $s<k$ there exists a $q$-ary linear code of length $n-s$, dimension $k-s$ and minimum distance $d$.
\end{enumerate}
\end{proposition}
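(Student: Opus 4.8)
The plan is to derive each of the three rules from its elementary $s=1$ version and then iterate $s$ times; throughout, the governing idea is to pin the minimum distance from both sides by carrying along a fixed minimum-weight codeword $w$ of the starting code $C$, so $\operatorname{wt}(w)=d$. A structural argument will supply the lower bound on the distance, while $w$ (or its image under the operation) will supply the matching upper bound, forcing the distance to equal the claimed value exactly.

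For (i) I would realize the passage $[n,k,d]\to[n,k,d-s]$ in one shot. Fix $w$, choose $s$ coordinates $j_1,\dots,j_s$ lying in $\operatorname{supp}(w)$ (possible since $\operatorname{wt}(w)=d>s$), and let $\pi\colon\FF_q^n\to\FF_q^n$ be the linear map that sets these $s$ coordinates to $0$ and fixes the rest. Put $C'=\pi(C)$, a code of length $n$. Since any nonzero vector supported on $\{j_1,\dots,j_s\}$ has weight at most $s<d$, no such vector lies in $C$; hence $\ker\pi\cap C=\{0\}$ and $\dim C'=k$. As $\pi$ lowers weight by at most $s$, every nonzero word of $C'$ has weight $\ge d-s$, while $\pi(w)$ has weight exactly $d-s$, so $C'$ is an $[n,k,d-s]$ code. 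For (ii) the passage $[n,k,d]\to[n,k-s,d]$ is obtained by passing to a subcode: extend $\{w\}$ to a basis of $C$ and let $C'$ be the span of $w$ together with $k-s-1$ further basis vectors, so $\dim C'=k-s\ge1$ (this uses $s<k$). Then $C'\subseteq C$ gives $d(C')\ge d$, and $w\in C'$ gives $d(C')\le d$, whence $C'$ is an $[n,k-s,d]$ code.

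For (iii) I would iterate elementary shortening. At each step, given a current code with parameters $[n',k',d]$ and a minimum-weight word $w'$, I would choose a coordinate $j$ at which $w'$ vanishes and at which the code is nondegenerate (its $j$-th coordinate functional is not identically zero), pass to the subcode of words vanishing at $j$, and delete coordinate $j$. This lowers both length and dimension by exactly one, and since only a zero coordinate of a surviving minimum-weight word is removed, the minimum distance stays equal to $d$; repeating $s<k$ times yields an $[n-s,k-s,d]$ code.

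\textbf{Main obstacle.} The delicate point is entirely in (iii): one must guarantee, at every one of the $s$ steps, the existence of a coordinate that is simultaneously outside the support of some current minimum-weight codeword and nondegenerate for the code, so that the dimension genuinely drops by one while a minimum-weight word survives the deletion. Degenerate coordinates, on which every codeword vanishes, must be stripped off separately, since they shrink the length without affecting $k$ or $d$; one then checks that the remaining, genuinely used coordinates always admit a valid choice as long as $k-s\ge1$. Rules (i) and (ii), by contrast, are immediate once a single minimum-weight codeword has been fixed.
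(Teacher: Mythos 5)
The paper gives no proof of this proposition at all: it is quoted as Exercise~7 of \cite{TV} and used as a black box, so there is nothing internal to compare you against; your argument has to stand on its own. Parts (i) and (ii) do: zeroing out $s$ coordinates inside the support of a fixed minimum-weight word $w$ kills no nonzero codeword (any vector supported there has weight $\le s<d$), lowers each weight by at most $s$, and sends $w$ to a word of weight exactly $d-s$; and a $(k-s)$-dimensional subcode containing $w$ has minimum distance squeezed between $d$ and $\operatorname{wt}(w)=d$. Both constructions are complete and correct.

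The one genuine gap is the point you yourself flag in (iii): you assert, but do not prove, that at every step there is a coordinate outside the support of a current minimum-weight word at which the code is nondegenerate. This is where the proof actually lives, and it closes in one line via the Singleton bound. Before step $t$ the current code $C'$ has parameters $[n-t+1,\,k-t+1,\,d]$ with $k-t+1\ge k-s+1\ge 2$. If every coordinate off $\operatorname{supp}(w')$ were degenerate, then all of $C'$ would be supported on the $d$ coordinates of $\operatorname{supp}(w')$, so deleting the degenerate positions would give a $[d,\,k-t+1,\,d]$ code, and Singleton forces $k-t+1\le d-d+1=1$, a contradiction. (Alternatively, and perhaps closer to your ``strip off degenerate coordinates'' remark: if $j$ is degenerate you may delete it to get an $[n'-1,k',d]$ code and then apply your part (ii) to drop the dimension by one, so the induction step goes through in that case as well; either route works, but one of them must actually be written down.) With that observation inserted, (iii) is also correct.
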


The notation of Section \ref{sec2} is kept. By Theorem \ref{dist2}, together with both (i) and (ii) of Proposition \ref{spoiling}, a code ${\tilde C}_d(P)$ can be assumed to be an $[n,k,d]_q$ code with $n=\#\cX(\fq)-1$ and $k=n-r_d$.  Note that $r_d$ can be obtained from the Weierstrass semigroup $H(P)$ by straightforward computation.

The following tables provide a list of codes that, according to the online database MinT \cite{MINT}, have larger minimum distance with respect to the previously known codes with same dimension and same length. 
The value of $s$ in each entry means that the $[n-i,k-i,d]_q$ code obtained from ${\tilde C}_d(P)$ by applying 
the propagation rule (iii) of Proposition \ref{spoiling} has better parameters that the known codes for each $i\le s$. For the sake of completeness, the parameters $[n-s,k-s,d]$ appear in the tables.


\newpage


\begin{tiny}
\begin{table*}
\renewcommand{\arraystretch}{1.3}
\caption{Improvements on \cite{MINT} - $q=49$} \label{49}
\centering
\begin{tabular}{|c|c|c||c||c|c|c||c|||c|c|c||c||c|c|c||c|}
\hline
$n$ & $k$ & $d$ & \ref{spoiling}(iii) & $n-s$ & $k-s$ & $d$ & Code & $n$ & $k$ & $d$ & \ref{spoiling}(iii) & $n-s$ & $k-s$ & $d$ & Code\\
\hline
91 & 80 & 9 & s=10 & 81 & 70 & 9 & (D2a)(D2b) & 91 & 50 & 39 & s=10 & 81 & 40 & 39 & (D2a)(D2b)\\
91 & 79 & 10 & s=10 & 81 & 69 & 10 & (D2a)(D2b) & 91 & 49 & 40 & s=10 & 81 & 39 & 40 & (D2a)(D2b)\\
91 & 78 & 11 & s=10 & 81 & 68 & 11 & (D2a)(D2b) & 91 & 48 & 41 & s=10 & 81 & 38 & 41 & (D2a)(D2b)\\
91 & 77 & 12 & s=10 & 81 & 67 & 12 & (D2a)(D2b) & 91 & 47 & 42 & s=10 & 81 & 37 & 42 & (D2a)(D2b)\\
91 & 76 & 13 & s=10 & 81 & 66 & 13 & (D2a)(D2b) & 91 & 46 & 43 & s=10 & 81 & 36 & 43 & (D2a)(D2b)\\
91 & 75 & 14 & s=10 & 81 & 65 & 14 & (D2a)(D2b) & 91 & 45 & 44 & s=10 & 81 & 35 & 44 & (D2a)(D2b)\\
91 & 74 & 15 & s=10 & 81 & 64 & 15 & (D2a)(D2b) & 91 & 44 & 45 & s=10 & 81 & 34 & 45 & (D2a)(D2b)\\
91 & 73 & 16 & s=10 & 81 & 63 & 16 & (D2a)(D2b) & 91 & 43 & 46 & s=10 & 81 & 33 & 46 & (D2a)(D2b)\\
91 & 72 & 17 & s=10 & 81 & 62 & 17 & (D2a)(D2b) & 91 & 42 & 47 & s=10 & 81 & 32 & 47 & (D2a)(D2b)\\
91 & 71 & 18 & s=10 & 81 & 61 & 18 & (D2a)(D2b) & 91 & 41 & 48 & s=10 & 81 & 31 & 48 & (D2a)(D2b)\\
91 & 70 & 19 & s=10 & 81 & 60 & 19 & (D2a)(D2b) & 91 & 40 & 49 & s=10 & 81 & 30 & 49 & (D2a)(D2b)\\
91 & 69 & 20 & s=10 & 81 & 59 & 20 & (D2a)(D2b) & 91 & 39 & 50 & s=10 & 81 & 29 & 50 & (D2a)(D2b)\\
91 & 68 & 21 & s=10 & 81 & 58 & 21 & (D2a)(D2b) & 91 & 38 & 51 & s=10 & 81 & 28 & 51 & (D2a)(D2b)\\
91 & 67 & 22 & s=10 & 81 & 57 & 22 & (D2a)(D2b) & 91 & 37 & 52 & s=10 & 81 & 27 & 52 & (D2a)(D2b)\\
91 & 66 & 23 & s=10 & 81 & 56 & 23 & (D2a)(D2b) & 91 & 36 & 53 & s=10 & 81 & 26 & 53 & (D2a)(D2b)\\
91 & 65 & 24 & s=10 & 81 & 55 & 24 & (D2a)(D2b) & 147 & 129 & 12 & s=18 & 129 & 111 & 12 & (E1)\\
91 & 64 & 25 & s=10 & 81 & 54 & 25 & (D2a)(D2b) & 147 & 128 & 13 & s=32 & 115 & 96 & 13 & (E1)\\
91 & 63 & 26 & s=10 & 81 & 53 & 26 & (D2a)(D2b) & 147 & 127 & 14 & s=32 & 115 & 95 & 14 & (E1)\\
91 & 62 & 27 & s=10 & 81 & 52 & 27 & (D2a)(D2b) & 147 & 126 & 15 & s=44 & 103 & 82 & 15 & (E1)\\
91 & 61 & 28 & s=10 & 81 & 51 & 28 & (D2a)(D2b) & 147 & 125 & 16 & s=46 & 101 & 79 & 16 & (E1)\\
91 & 60 & 29 & s=10 & 81 & 50 & 29 & (D2a)(D2b) & 147 & 124 & 17 & s=58 & 89 & 66 & 17 & (E1)\\
91 & 59 & 30 & s=10 & 81 & 49 & 30 & (D2a)(D2b) & 147 & 123 & 18 & s=58 & 89 & 65 & 18 & (E1)\\
91 & 58 & 31 & s=10 & 81 & 48 & 31 & (D2a)(D2b) & 147 & 122 & 19 & s=58 & 89 & 64 & 19 & (E1)\\
91 & 57 & 32 & s=10 & 81 & 47 & 32 & (D2a)(D2b) & 147 & 121 & 20 & s=58 & 89 & 63 & 20 & (E1)\\
91 & 56 & 33 & s=10 & 81 & 46 & 33 & (D2a)(D2b) & 147 & 120 & 21 & s=58 & 89 & 62 & 21 & (E1)\\
91 & 55 & 34 & s=10 & 81 & 45 & 34 & (D2a)(D2b) & 147 & 119 & 22 & s=58 & 89 & 61 & 22 & (E1)\\
91 & 54 & 35 & s=10 & 81 & 44 & 35 & (D2a)(D2b) & 147 & 118 & 23 & s=58 & 89 & 60 & 23 & (E1)\\
91 & 53 & 36 & s=10 & 81 & 43 & 36 & (D2a)(D2b) & 147 & 117 & 24 & s=58 & 89 & 59 & 24 & (E1)\\
91 & 52 & 37 & s=10 & 81 & 42 & 37 & (D2a)(D2b) & 147 & 116 & 25 & s=58 & 89 & 58 & 25 & (E1)\\
91 & 51 & 38 & s=10 & 81 & 41 & 38 & (D2a)(D2b) & 147 & 115 & 26 & s=58 & 89 & 57 & 26 & (E1)\\
\hline
\end{tabular}
\end{table*}
\end{tiny}

\newpage

\begin{tiny}
\begin{table*}
\renewcommand{\arraystretch}{1.3}
\caption{Improvements on \cite{MINT} - $q=49$} \label{49bis}
\centering
\begin{tabular}{|c|c|c||c||c|c|c||c|||c|c|c||c||c|c|c||c|}
\hline
$n$ & $k$ & $d$ & \ref{spoiling}(iii) & $n-s$ & $k-s$ & $d$ & Code & $n$ & $k$ & $d$ & \ref{spoiling}(iii) & $n-s$ & $k-s$ & $d$ & Code\\
\hline
147 & 114 & 27 & s=58 & 89 & 56 & 27 & (E1) & 175 & 148 & 19 & s=72 & 103 & 76 & 19 & (D1a)(D1b)\\
147 & 113 & 28 & s=58 & 89 & 55 & 28 & (E1) & 175 & 147 & 20 & s=74 & 101 & 73 & 20 & (D1a)(D1b)\\
147 & 112 & 29 & s=58 & 89 & 54 & 29 & (E1) & 175 & 146 & 21 & s=82 & 93 & 64 & 21 & (D1a)(D1b)\\
147 & 111 & 30 & s=58 & 89 & 53 & 30 & (E1) & 175 & 145 & 22 & s=82 & 93 & 63 & 22 & (D1a)(D1b)\\
147 & 110 & 31 & s=58 & 89 & 52 & 31 & (E1) & 175 & 144 & 23 & s=82 & 93 & 62 & 23 & (D1a)(D1b)\\
147 & 109 & 32 & s=58 & 89 & 51 & 32 & (E1) & 175 & 143 & 24 & s=82 & 93 & 61 & 24 & (D1a)(D1b)\\
147 & 108 & 33 & s=58 & 89 & 50 & 33 & (E1) & 175 & 142 & 25 & s=82 & 93 & 60 & 25 & (D1a)(D1b)\\
147 & 107 & 34 & s=58 & 89 & 49 & 34 & (E1) & 175 & 141 & 26 & s=82 & 93 & 59 & 26 & (D1a)(D1b)\\
147 & 106 & 35 & s=58 & 89 & 48 & 35 & (E1) & 175 & 140 & 27 & s=82 & 93 & 58 & 27 & (D1a)(D1b)\\
147 & 105 & 36 & s=58 & 89 & 47 & 36 & (E1) & 175 & 139 & 28 & s=82 & 93 & 57 & 28 & (D1a)(D1b)\\
147 & 104 & 37 & s=58 & 89 & 46 & 37 & (E1) & 175 & 138 & 29 & s=82 & 93 & 56 & 29 & (D1a)(D1b)\\
147 & 103 & 38 & s=58 & 89 & 45 & 38 & (E1) & 175 & 137 & 30 & s=82 & 93 & 55 & 30 & (D1a)(D1b)\\
147 & 102 & 39 & s=58 & 89 & 44 & 39 & (E1) & 175 & 136 & 31 & s=82 & 93 & 54 & 31 & (D1a)(D1b)\\
147 & 101 & 40 & s=58 & 89 & 43 & 40 & (E1) & 175 & 135 & 32 & s=82 & 93 & 53 & 32 & (D1a)(D1b)\\
147 & 100 & 41 & s=58 & 89 & 42 & 41 & (E1) & 175 & 134 & 33 & s=82 & 93 & 52 & 33 & (D1a)(D1b)\\
147 & 99 & 42 & s=58 & 89 & 41 & 42 & (E1) & 175 & 133 & 34 & s=82 & 93 & 51 & 34 & (D1a)(D1b)\\
147 & 98 & 43 & s=58 & 89 & 40 & 43 & (E1) & 175 & 132 & 35 & s=82 & 93 & 50 & 35 & (D1a)(D1b)\\
147 & 97 & 44 & s=58 & 89 & 39 & 44 & (E1) & 175 & 131 & 36 & s=82 & 93 & 49 & 36 & (D1a)(D1b)\\
147 & 96 & 45 & s=58 & 89 & 38 & 45 & (E1) & 175 & 130 & 37 & s=82 & 93 & 48 & 37 & (D1a)(D1b)\\
147 & 95 & 46 & s=58 & 89 & 37 & 46 & (E1) & 175 & 129 & 38 & s=82 & 93 & 47 & 38 & (D1a)(D1b)\\
147 & 94 & 47 & s=58 & 89 & 36 & 47 & (E1) & 175 & 128 & 39 & s=82 & 93 & 46 & 39 & (D1a)(D1b)\\
147 & 93 & 48 & s=58 & 89 & 35 & 48 & (E1) & 175 & 127 & 40 & s=82 & 93 & 45 & 40 & (D1a)(D1b)\\
147 & 92 & 49 & s=58 & 89 & 34 & 49 & (E1) & 175 & 126 & 41 & s=82 & 93 & 44 & 41 & (D1a)(D1b)\\
175 & 157 & 12 & s=46 & 129 & 111 & 12 & (D1a)(D1b) & 175 & 125 & 42 & s=82 & 93 & 43 & 42 & (D1a)(D1b)\\
175 & 155 & 13 & s=25 & 150 & 130 & 13 & (D1b) & 175 & 124 & 43 & s=82 & 93 & 42 & 43 & (D1a)(D1b)\\
175 & 154 & 14 & s=24 & 151 & 130 & 14 & (D1a) & 175 & 123 & 44 & s=82 & 93 & 41 & 44 & (D1a)(D1b)\\
175 & 153 & 15 & s=60 & 115 & 93 & 15 & (D1a) & 175 & 122 & 45 & s=82 & 93 & 40 & 45 & (D1a)(D1b)\\
175 & 152 & 15 & s=22 & 153 & 130 & 15 & (D1b) & 175 & 121 & 46 & s=82 & 93 & 39 & 46 & (D1a)(D1b)\\
175 & 151 & 16 & s=46 & 129 & 105 & 16 & (D1a)(D1b) & 175 & 120 & 47 & s=82 & 93 & 38 & 47 & (D1a)(D1b)\\
175 & 150 & 18 & s=74 & 101 & 76 & 18 & (D1a)(D1b) &   &   &   &   &   &   &   &  \\
\hline
\end{tabular}
\end{table*}
\end{tiny}

\newpage


\begin{table*}
\renewcommand{\arraystretch}{1.3}
\caption{Improvements on \cite{MINT} - $q=64$} \label{64}
\centering
\begin{tabular}{|c|c|c||c||c|c|c||c|}
\hline
$n$ & $k$ & $d$ & Prop.\ref{spoiling}(iii) & $n-s$ & $k-s$ & $d$ & Code\\
\hline
176 & 162 & 10 & s=29 & 147 & 133 & 10 & (D3b)\\
176 & 159 & 12 & s=14 & 162 & 145 & 12 & (D3a)\\
176 & 157 & 14 & s=14 & 162 & 143 & 14 & (D3a)\\
256 & 232 & 15 & s=30 & 226 & 202 & 15 & (C1a)\\
256 & 231 & 16 & s=30 & 226 & 201 & 16 & (C1a)\\
256 & 230 & 16 & s=19 & 237 & 211 & 16 & (C1b)\\
256 & 229 & 18 & s=30 & 226 & 199 & 18 & (C1b)\\
256 & 228 & 18 & s=28 & 228 & 200 & 18 & (C1a)\\
256 & 226 & 20 & s=28 & 228 & 198 & 20 & (C1a)\\
256 & 225 & 21 & s=28 & 228 & 197 & 21 & (C1a)\\
256 & 222 & 24 & s=28 & 228 & 194 & 24 & (C1a)\\
\hline
\end{tabular}
\end{table*}

\newpage


\begin{table*}
\renewcommand{\arraystretch}{1.3}
\caption{Improvements on \cite{MINT} - $q=81$} \label{81}
\centering
\begin{tabular}{|c|c|c||c||c|c|c||c|}
\hline
$n$ & $k$ & $d$ & Prop.\ref{spoiling}(iii) & $n-s$ & $k-s$ & $d$ & Code\\
\hline
225 & 207 & 12 & s=24 & 201 & 183 & 12 & (B1)(E2)\\
243 & 225 & 12 & s=42 & 201 & 183 & 12 & (C4)\\
243 & 223 & 13 & s=16 & 227 & 207 & 13 & (C4)\\
243 & 222 & 14 & s=16 & 227 & 206 & 14 & (C4)\\
243 & 221 & 15 & s=16 & 227 & 205 & 15 & (C4)\\
243 & 220 & 16 & s=16 & 227 & 204 & 16 & (C4)\\
243 & 218 & 18 & s=16 & 227 & 202 & 18 & (C4)\\
369 & 339 & 18 & s=25 & 344 & 314 & 18 & (D4a)(D4b)\\
369 & 337 & 19 & s=4 & 365 & 333 & 19 & (D4a)\\
369 & 336 & 20 & s=36 & 333 & 300 & 20 & (D4a)\\
369 & 334 & 21 & s=28 & 341 & 306 & 21 & (D4a)\\
369 & 333 & 23 & s=66 & 303 & 267 & 23 & (D4a)(D4b)\\
369 & 332 & 24 & s=66 & 303 & 266 & 24 & (D4a)(D4b)\\
369 & 330 & 25 & s=64 & 305 & 266 & 25 & (D4b)\\
369 & 328 & 27 & s=64 & 305 & 264 & 27 & (D4a)\\
369 & 327 & 28 & s=64 & 305 & 263 & 28 & (D4a)\\
369 & 323 & 32 & s=64 & 305 & 259 & 32 & (D4a)(D4b)\\
\hline
\end{tabular}
\end{table*}

\newpage


\begin{tiny}
\begin{table*}
\renewcommand{\arraystretch}{1.3}
\caption{Improvements on \cite{MINT} - $q=256$} \label{256}
\centering
\begin{tabular}{|c|c|c||c||c|c|c||c|}
\hline
$n$ & $k$ & $d$ & Prop.\ref{spoiling}(iii) & $n-s$ & $k-s$ & $d$ & Code\\
\hline
512 & 495 & 14 & s=186 & 326 & 309 & 14 & (C2)\\
512 & 494 & 16 & s=188 & 324 & 306 & 16 & (C2)\\
512 & 493 & 17 & s=188 & 324 & 305 & 17 & (C2)\\
512 & 492 & 18 & s=188 & 324 & 304 & 18 & (C2)\\
512 & 491 & 19 & s=188 & 324 & 303 & 19 & (C2)\\
512 & 490 & 20 & s=188 & 324 & 302 & 20 & (C2)\\
512 & 489 & 21 & s=188 & 324 & 301 & 21 & (C2)\\
512 & 488 & 22 & s=188 & 324 & 300 & 22 & (C2)\\
512 & 487 & 23 & s=188 & 324 & 299 & 23 & (C2)\\
512 & 486 & 24 & s=188 & 324 & 298 & 24 & (C2)\\
512 & 485 & 25 & s=188 & 324 & 297 & 25 & (C2)\\
512 & 484 & 26 & s=188 & 324 & 296 & 26 & (C2)\\
512 & 483 & 27 & s=188 & 324 & 295 & 27 & (C2)\\
512 & 482 & 28 & s=188 & 324 & 294 & 28 & (C2)\\
512 & 481 & 29 & s=188 & 324 & 293 & 29 & (C2)\\
512 & 480 & 30 & s=188 & 324 & 292 & 30 & (C2)\\
512 & 479 & 31 & s=188 & 324 & 291 & 31 & (C2)\\
512 & 478 & 32 & s=188 & 324 & 290 & 32 & (C2)\\
512 & 477 & 33 & s=188 & 324 & 289 & 33 & (C2)\\
512 & 476 & 34 & s=188 & 324 & 288 & 34 & (C2)\\
512 & 475 & 35 & s=188 & 324 & 287 & 35 & (C2)\\
512 & 474 & 36 & s=188 & 324 & 286 & 36 & (C2)\\
512 & 473 & 37 & s=188 & 324 & 285 & 37 & (C2)\\
512 & 472 & 38 & s=188 & 324 & 284 & 38 & (C2)\\
512 & 471 & 39 & s=188 & 324 & 283 & 39 & (C2)\\
512 & 470 & 40 & s=188 & 324 & 282 & 40 & (C2)\\
512 & 469 & 41 & s=188 & 324 & 281 & 41 & (C2)\\
512 & 468 & 42 & s=188 & 324 & 280 & 42 & (C2)\\
512 & 467 & 43 & s=188 & 324 & 279 & 43 & (C2)\\
512 & 466 & 44 & s=188 & 324 & 278 & 44 & (C2)\\
512 & 465 & 45 & s=188 & 324 & 277 & 45 & (C2)\\
512 & 464 & 46 & s=188 & 324 & 276 & 46 & (C2)\\
512 & 463 & 47 & s=188 & 324 & 275 & 47 & (C2)\\
512 & 462 & 48 & s=188 & 324 & 274 & 48 & (C2)\\
512 & 461 & 49 & s=188 & 324 & 273 & 49 & (C2)\\
512 & 460 & 50 & s=188 & 324 & 272 & 50 & (C2)\\
512 & 459 & 51 & s=188 & 324 & 271 & 51 & (C2)\\
512 & 458 & 52 & s=188 & 324 & 270 & 52 & (C2)\\
\hline
\end{tabular}
\end{table*}
\end{tiny}

\begin{tiny}
\begin{table*}
\renewcommand{\arraystretch}{1.3}
\caption{Improvements on \cite{MINT} - $q=256$} \label{256bis}
\centering
\begin{tabular}{|c|c|c||c||c|c|c||c|}
\hline
$n$ & $k$ & $d$ & Prop.\ref{spoiling}(iii) & $n-s$ & $k-s$ & $d$ & Code\\
\hline
512 & 457 & 53 & s=188 & 324 & 269 & 53 & (C2)\\
512 & 456 & 54 & s=188 & 324 & 268 & 54 & (C2)\\
512 & 455 & 55 & s=188 & 324 & 267 & 55 & (C2)\\
512 & 454 & 56 & s=188 & 324 & 266 & 56 & (C2)\\
512 & 453 & 57 & s=188 & 324 & 265 & 57 & (C2)\\
512 & 452 & 58 & s=188 & 324 & 264 & 58 & (C2)\\
512 & 451 & 59 & s=188 & 324 & 263 & 59 & (C2)\\
512 & 450 & 60 & s=188 & 324 & 262 & 60 & (C2)\\
512 & 449 & 61 & s=188 & 324 & 261 & 61 & (C2)\\
512 & 448 & 62 & s=188 & 324 & 260 & 62 & (C2)\\
512 & 447 & 63 & s=188 & 324 & 259 & 63 & (C2)\\
512 & 446 & 64 & s=188 & 324 & 258 & 64 & (C2)\\
512 & 445 & 65 & s=188 & 324 & 257 & 65 & (C2)\\
512 & 444 & 66 & s=188 & 324 & 256 & 66 & (C2)\\
1024 & 980 & 27 & s=247 & 777 & 733 & 27 & (C3)\\
1024 & 979 & 28 & s=248 & 776 & 731 & 28 & (C3)\\
1024 & 978 & 30 & s=413 & 611 & 565 & 30 & (C3)\\
1024 & 976 & 32 & s=445 & 579 & 531 & 32 & (C3)\\
1024 & 975 & 33 & s=477 & 547 & 498 & 33 & (C3)\\
1024 & 974 & 35 & s=494 & 530 & 480 & 35 & (C3)\\
1024 & 973 & 36 & s=494 & 530 & 479 & 36 & (C3)\\
1024 & 972 & 40 & s=500 & 524 & 472 & 40 & (C3)\\
1024 & 970 & 41 & s=498 & 526 & 472 & 41 & (C3)\\
1024 & 969 & 42 & s=498 & 526 & 471 & 42 & (C3)\\
1024 & 968 & 43 & s=498 & 526 & 470 & 43 & (C3)\\
1024 & 967 & 44 & s=498 & 526 & 469 & 44 & (C3)\\
1024 & 966 & 45 & s=498 & 526 & 468 & 45 & (C3)\\
1024 & 965 & 46 & s=498 & 526 & 467 & 46 & (C3)\\
1024 & 964 & 47 & s=498 & 526 & 466 & 47 & (C3)\\
1024 & 963 & 48 & s=498 & 526 & 465 & 48 & (C3)\\
1024 & 962 & 49 & s=498 & 526 & 464 & 49 & (C3)\\
1024 & 961 & 50 & s=498 & 526 & 463 & 50 & (C3)\\
1024 & 960 & 51 & s=498 & 526 & 462 & 51 & (C3)\\
1024 & 959 & 52 & s=498 & 526 & 461 & 52 & (C3)\\
1024 & 958 & 53 & s=498 & 526 & 460 & 53 & (C3)\\
1024 & 957 & 54 & s=498 & 526 & 459 & 54 & (C3)\\
1024 & 956 & 55 & s=498 & 526 & 458 & 55 & (C3)\\
\hline
\end{tabular}
\end{table*}
\end{tiny}

\end{document}